\documentclass{amsart}
\usepackage{amsmath, amsthm, amssymb}
\usepackage[all]{xy}
\usepackage{hyperref}
\usepackage{mathrsfs}
\usepackage{researchpreamble}

\numberwithin{equation}{section}

\newcommand{\kkn}{\kummer{K}{n}}

\begin{document}

\title{Period and index of genus one curves over number fields}
\author{Shahed Sharif}
\address{Dept.\ of Mathematics \\
Duke University \\
Durham, NC 27708}
\email{sharif@math.duke.edu}
\thanks{The author would like to thank Bjorn Poonen and Pete Clark for very helpful conversations.}
\date{October 11, 2008}

\begin{abstract}
  The period of a curve is the smallest positive degree of Galois-invariant divisor classes. The index is the smallest positive degree of rational divisors. We construct examples of genus one curves with prescribed period and index over certain number fields.
\end{abstract}

\maketitle

\section{Introduction}
\label{sec:introduction}

Let $X$ be a nonsingular, projective, geometrically integral curve over a field $K$. Define the \emph{index} $I(X)$ to be the greatest common divisor of $[L:K]$, where $L$ varies over finite algebraic field extensions for which $X(L) \neq \emptyset$. In particular, if $X$ has a rational point over $K$ already, the index is 1. Define the \emph{period} $P(X)$ of $X$ to be smallest positive degree amongst Galois-invariant divisor classes on $X$. That is, if $\Kb$ is an algebraic closure for $K$, then we consider divisor classes on $\Xbar := X\times \Kb$ which are fixed points for the Galois action $\Gal(\Kb/K)$. We have $P(X) \mid I(X)$, for the Galois orbit of any $x\in X(L)$ yields an invariant divisor class. However, the two need not be equal: take the example of a conic without rational points, say (the projective curve given by) $x^2 + y^2 = -1$ over $\R$. Clearly, the index is $2$, but the class of a single point is Galois invariant.

In~\cite{lichtenbaum1969}, Lichtenbaum showed that 

\begin{theorem}\label{thm:licht}
  For $X/K$ as above, if the genus, period, and index of $X$ are $g$, $P$, and $I$ respectively, then $P\mid I \mid 2P^2$, $I\mid (2g-2)$, and if either $(2g-2)/I$ or $P$ is even, then $I\mid P^2$.
\end{theorem}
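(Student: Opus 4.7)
The bounds $P \mid I$ and $I \mid 2g-2$ are straightforward: the first is recalled in the introduction, and for the second, the divisor of any nonzero rational section of $\Omega^1_{X/K}$ is a $K$-rational divisor of degree $2g - 2$ (the $g = 0$ and $g = 1$ cases being direct or vacuous). The substantive part is the upper bound $I \mid 2P^2$.

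Fix an invariant divisor class $c \in \mathrm{Pic}(\overline{X})^G$ of degree $P$, with $G = \operatorname{Gal}(\overline{K}/K)$. Hochschild--Serre for $\mathbb{G}_m$ on $X$, together with Hilbert 90, yields the exact sequence
\begin{equation*}
0 \to \mathrm{Pic}(X) \to \mathrm{Pic}(\overline{X})^G \xrightarrow{\delta} \mathrm{Br}(K),
\end{equation*}
so $c$ carries a Brauer obstruction $\alpha := \delta(c)$ to being represented by a $K$-rational divisor class. For any positive integer $n$ with $n\alpha = 0$, the class $nc$ is $K$-rational; and for $nP \geq 2g - 1$, Riemann--Roch makes the complete linear system $|nc|$ a Severi--Brauer variety over $K$ of dimension $nP - g$, whose Brauer class $\beta_n$ vanishes precisely when $|nc|$ has a $K$-rational point. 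Such a point yields an effective rational divisor of degree $nP$, giving $I \mid nP$, so the task is to find small $n$ killing both $n\alpha$ and $\beta_n$.

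One computes $\beta_n$ from the projective-bundle structure of the Abel--Jacobi map $\mathrm{Sym}^{nP} X \to \mathrm{Pic}^{nP}_{X/K}$, expecting a formula of the shape $\beta_n = n\gamma + \epsilon_n$ with $\gamma \in \mathrm{Br}(K)$ of order dividing $P$ (reflecting $P \cdot [\mathrm{Pic}^1_X] = 0$ in $H^1(K, J)$, with $J$ the Jacobian) and $\epsilon_n$ a $2$-torsion correction coming from Serre duality and the canonical class. Taking $n = 2P$ kills both contributions, proving $I \mid 2P^2$; under the parity hypothesis ($P$ or $(2g-2)/I$ even), one expects $\epsilon_P$ to vanish, so that $n = P$ already suffices, sharpening the bound to $I \mid P^2$. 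The principal obstacle is the explicit determination of $\beta_n$, especially the $2$-torsion correction $\epsilon_n$: this requires a careful analysis of how the projective bundle transforms under translation on $\mathrm{Pic}^{nP}_{X/K}$ and an appeal to the autoduality of $J$ and the role of the canonical class in parity questions.
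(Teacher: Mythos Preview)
The paper does not prove this theorem; it simply attributes it to Lichtenbaum and cites \cite{lichtenbaum1969}. There is therefore no proof in the paper to compare your proposal against.

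As for the proposal itself: the overall framework is correct and is indeed in the spirit of Lichtenbaum's original argument, which proceeds via the Brauer obstruction map $\delta$ arising from the Leray spectral sequence (exactly the exact sequence you write down). Two remarks, however. First, your two conditions are redundant: for $nP > 2g-2$, the Brauer class $\beta_n$ of the Severi--Brauer variety $|nc|$ \emph{is} $\delta(nc) = n\alpha$, since a rational point of $|nc|$ is the same thing as a rational effective divisor in the class $nc$, which exists (by Riemann--Roch) if and only if $nc$ lifts to $\mathrm{Pic}(X)$. So the problem is simply to bound the order of $\alpha$ by $2P$ (resp.\ by $P$ under the parity hypothesis). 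Second, and more seriously, you explicitly acknowledge that the crux---the ``formula of the shape $\beta_n = n\gamma + \epsilon_n$'' and the identification of the $2$-torsion correction---is not actually carried out; phrases like ``expecting'' and ``one expects'' signal a heuristic, not a proof. Lichtenbaum's argument bounds the order of $\alpha$ by constructing a bilinear pairing on $\mathrm{Pic}(\overline{X})^G$ valued in $\mathrm{Br}(K)$ (essentially a cup product) and playing it off against Riemann--Roch; the factor of $2$ and the parity refinement drop out of the Riemann--Roch formula $\ell(D) - \ell(K_X - D) = \deg D - g + 1$. Your Abel--Jacobi heuristic points in the right direction but would need substantial work to be made into an argument.
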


However, over local fields and $C_1$ fields, the above divisibility conditions are not sharp. For example, for genus 1 curves over local fields, the period and index are always equal~\cite{lichtenbaum1969}. One may then ask what triples $(g,P,I)$ actually occur as the genus, period, and index of a curve over a fixed $K$. Over local fields of characteristic not 2, the problem is solved in~\cite{sharif2007}. We consider the case $g=1$ and prove the following:

\begin{theorem}\label{thm:main}
  Let $K$ be a number field and $E$ an elliptic curve over $K$. Let $\ell$, $n$ be positive integers such that $\ell \mid n$. Suppose either that $K$ contains the $n$th roots of unity, or that $E$ has an order $n$ Galois-stable cyclic subgroup. If $n$ is even and $4\nmid \ell$, suppose further that $K$ contains the $(2n)$th roots of unity, or that $E$ has a Galois-stable cyclic subgroup of order $2n$. Then there exists a genus 1 curve $X$ with Jacobian $E$ for which $P(X) = n$ and $I(X) = n \ell$.
\end{theorem}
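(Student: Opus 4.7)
The plan is to realize $X$ as a principal homogeneous space for $E$ by constructing a suitable class in the Weil--Ch\^atelet group $H^1(K,E)$, and then to control the ratio $I(X)/P(X)$ via the O'Neil period--index obstruction in the Brauer group of $K$.

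I would begin from the multiplication-by-$n$ Kummer sequence
\[
E(K)/nE(K) \hookrightarrow H^1(K,E[n]) \twoheadrightarrow H^1(K,E)[n].
\]
A class $\xi \in H^1(K,E[n])$ parametrises a genus one curve $X/K$ together with a Galois-invariant divisor class of degree $n$; its image in $H^1(K,E)$ records the torsor structure. Under the hypothesis on $K$ and $E$, the Weil pairing produces a $G_K$-equivariant decomposition $E[n]\cong \mu_n\oplus\mathbf{Z}/n\mathbf{Z}$ (possibly after a twist that is easy to handle), which identifies
\[
H^1(K,E[n])\cong K^*/K^{*n}\oplus\mathrm{Hom}(G_K,\mathbf{Z}/n\mathbf{Z}).
\]

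Next I would invoke the period--index obstruction: cupping with the Weil pairing sends $\xi=(a,\chi)$ to the cyclic algebra $(a,\chi)_n\in\mathrm{Br}(K)[n]$, and the associated torsor $X_\xi$ satisfies $I(X_\xi)=P(X_\xi)\cdot\mathrm{ind}\bigl((a,\chi)_n\bigr)$. The problem then splits into two tasks: arrange that the image of $\xi$ in $H^1(K,E)$ has order exactly $n$, and arrange that the cyclic algebra $(a,\chi)_n$ has index exactly $\ell$. Both tasks are carried out by prescribing local data at judiciously chosen primes of $K$: $\chi$ is selected to cut out a cyclic extension of exact degree $n$ (forcing $P(X)=n$), while $a$ is chosen so that, by Albert--Brauer--Hasse--Noether, the local invariants of $(a,\chi)_n$ sum to a class of the prescribed order $\ell\mid n$ in $\mathrm{Br}(K)$.

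The main obstacle I anticipate is the parity subcase where $n$ is even and $4\nmid\ell$. Here, constraints on the local invariants of a cyclic algebra with $\mu_n$-valued symbol can force its order to be divisible by $2\ell$ rather than $\ell$, so the naive construction would overshoot to $I(X)=2n\ell$. The supplementary hypothesis giving $\mu_{2n}\subset K$ (or a $G_K$-stable cyclic subgroup of order $2n$ on $E$) provides exactly the additional flexibility needed at the prime $2$: one may work with symbols at level $2n$ and thereby kill the spurious $2$-torsion in the obstruction, yielding index exactly $\ell$ as required.
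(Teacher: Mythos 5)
There is a genuine gap—two, in fact, and they sit at the heart of why the paper's argument is as long as it is.

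First, the identification $H^1(K,E[n])\cong K^\times/K^{\times n}\oplus\operatorname{Hom}(G_K,\mathbf{Z}/n\mathbf{Z})$ does not follow from the hypotheses. Neither $\mu_n\subset K$ nor the existence of a Galois-stable cyclic subgroup of order $n$ gives a $G_K$-module splitting $E[n]\cong\mu_n\oplus\mathbf{Z}/n\mathbf{Z}$: in the first case the mod-$n$ representation merely lands in $\operatorname{SL}_2(\mathbf{Z}/n\mathbf{Z})$, and in the second case $E[n]$ is only an extension whose subobject need not be $\mu_n$ and which need not split. This is exactly why the paper cannot work with symbols $(a,\chi)$ over $K$ directly. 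It instead passes to $L=K(E[n])$, where $E[n]$ \emph{is} split and the obstruction is an honest Hilbert symbol, builds a class $\xi$ there, and takes $\eta=\operatorname{cores}_{L/K}\xi$. The price is that one must compute $\operatorname{res}\circ\operatorname{cores}$, which is a \emph{twisted} norm involving the matrices of the Galois representation (Proposition~\ref{prop:G-action-cohomology}); the hypothesis of the theorem is used at precisely one point, to kill the single troublesome symbol $\langle\pi,d\rangle_v$ in Lemma~\ref{lem:cyc-hilb-v} (either the representation is upper-triangular, or $\det M_\sigma=1$ and an antisymmetry argument applies). Your proposal skips all of this by assuming a decomposition that does not exist.

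Second, the asserted equality $I(X_\xi)=P(X_\xi)\cdot\operatorname{ind}\bigl((a,\chi)_n\bigr)$ is not a theorem; only the upper bound $I\mid n\cdot\operatorname{ord}(\operatorname{Ob}\xi)$ is known (Proposition~\ref{prop:ob-index}), and the paper explicitly remarks that the converse is open. The index of $X$ is the gcd over \emph{all} lifts of the class of $X$ to $H^1(K,E[m])$ for all $m$, so to get the lower bound $n\ell\mid I(X)$ one must rule out every representative $j_*(\xi)+\delta x$, $x\in E(K)$, at every level $n\ell'$ with $\ell'<\ell$. The paper does this with the Tate-pairing identity $\operatorname{Ob}(\xi+\delta x)=\operatorname{Ob}(\xi)+T(x,X)$ together with the carefully arranged condition that $E(K)\subset nE(K_v)$ at the chosen prime $v$ (conditions A2/B2), which forces the local Tate pairing to vanish there; the same condition is what pins down the period, since the order of $\xi$ in $H^1(K,E[n])$ does not by itself control the order of its image in $H^1(K,E)$. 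Your proposal supplies no mechanism for either lower bound. The local-invariant strategy for prescribing the order of the Brauer class, and your diagnosis of the even case, are in the right spirit, but as written the argument establishes at best $P(X)\mid n$ and $I(X)\mid n\ell$.
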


Note that for $g=1$, $(2g-2)/I$ is always even; therefore by Theorem~\ref{thm:licht} we have $I\mid P^2$. For $K$ satisfying the hypotheses of Theorem~\ref{thm:main}, then, we have covered every possible period and index for genus 1 curves.

The significance of period and index lies in computations of Tate-Shafarevich and Brauer groups. Grothendieck~\cite[\S 4 et seq.]{grothendieck-brauer3} is the canonical source for these computations. Gonzalez-Avil\'es~\cite{gonzalezaviles2003} showed that given a (suitably nice) curve $X$ over a global field $K$, under certain hypotheses the finiteness of the Brauer group of a model for $X$ is equivalent to the finiteness of the Tate-Shafarevich group of the Jacobian of $X$. The relationship between the sizes of these groups is then computable, and depends on the indices and periods of $X$ both over $K$ and over the completions of $K$. Liu, Lorenzini, and Raynaud extended this result (\cite[Theorem 4.3]{liu-lorenzini-raynaud2004} and \cite{liu-lorenzini-raynaud2005}). They showed that for $K$ the function field of a curve over a finite field, and assuming that for some prime $\ell$ the $\ell$-part of $\Br X$ is finite, or that the Tate-Shafarevich group of the Jacobian of $X$ is finite, we have $\Br X$ is finite and has square order.

Lastly, in a forthcoming paper~\cite{clark-sharif} the author with Pete Clark uses a similar result as the one in this paper to show that if $E$ is an elliptic curve over a number field $K$, $p$ is a prime and $N$ any integer, then there exists a $p$-extension $L/K$ such that $\#\Sha(E/L)[p] \geq N$; that is, the order of the $p$-torsion of the Tate-Shafarevich group is unbounded over $p$-extensions.

For other results exhibiting curves with various periods and indices, see Cassels~\cite{cassels1963}, Lang-Tate~\cite{lang-tate1958}, Stein~\cite{stein2002}, O'Neil~\cite{oneil2002}, and Clark~\cite{clark2005}, \cite{clark2006} and \cite{clark2007}. The strongest previous results in this direction are Clark's, who proved that when $E[p] \subset E(K)$ for $p$ prime, there are principal homogeneous spaces for $E$ with period $p$ and index $p^2$, and that there are curves of every index over every number field.

Recently, Jakob Stix~\cite{stix-preprint2008} has shown that if $X$ is a curve over $\Q$ and there is a section for the canonical map $\pi_1(X) \to \pi_1(\Q)$, then the period and index of $X$ are equal.



\section{Preliminary Results}
\label{sec:preliminary-results}

\subsection{Basic properties of period and index}
\label{sec:basic-prop-peri}

One can alternatively define the index to be the smallest positive degree of a divisor on $X$ over $K$. If $x \in X(\Kb)$, the Galois orbit of $x$ written as a divisor $\sum (^\sigma x)$ furnishes a rational divisor of positive degree. Any prime divisor (of $\Div X$, not $\Div \Xbar$) is necessarily of this form. This shows the equivalence of the two definitions. 

There is also an alternate definition of the period. Choose any $x \in X(\Kb)$. The cocycle $\sigma \mapsto\, {^\sigma x - x}$ furnishes a cohomology class in $\xi\in\Hp^1(K, J)$, where $J$ is the Jacobian variety of $X$. Let $\Pic^1_{X/K}$ be the connected component of the Picard scheme of $X$ representing degree 1 invertible sheaves; it is a principal homogeneous space for $J = \Pic^0_{X/K}$. As such, there is a class in $\Hp^1(K, J)$ representing $\Pic^1_{X/K}$, and in fact one sees that this class is $\xi$. Similarly, $n\xi$ represents $\Pic^n_{X/K}$. We know that $\Pic^n_{X/K}$ is a trivial principal homogeneous space for $J$ precisely when it possesses a $K$-point. Such a point occurs when there is a Galois-invariant element of $\Pic^n \Xbar$. Thus we may define the period as the order of $\xi$.

Note that when $X$ is a genus one curve, $X$ itself equals $\Pic^1_{X/K}$, and $\xi$ represents $X$ as an element of $\Hp^1(K, E)$, where $E$ is the Jacobian of $X$. We say a field $L$ \emph{splits} $X$ if $X(L) \neq \emptyset$. For $X$ a genus 1 curve represented by $\xi \in \Hp^1(K, E)$, $L$ splits $X$ if and only if $\xi$ lies in the kernel of the restriction map $\Hp^1(K, E) \to \Hp^1(L, E)$.

We now reduce the proof of Theorem~\ref{thm:main} to the case where $n$ (and hence $\ell$) is a prime power.

\begin{lemma} \label{lem:prime-decomposition}
  Let $P_1, P_2, I_1$, and $I_2$ be positive integers such that the $P_i$ are relatively prime and $P_i \mid I_i \mid P_i^2$. Let $E$ be an elliptic curve over a field $K$. If there are genus one curves $X_1$ and $X_2$ over $K$ with Jacobian $E$ such that $X_i$ has period $P_i$ and index $I_i$, then there is also a curve $X$ with Jacobian $E$ having period $P_1P_2$ and index $I_1I_2$.
\end{lemma}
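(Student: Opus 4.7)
The plan is to construct $X$ as the principal homogeneous space classified by $\xi := \xi_1 + \xi_2 \in \Hp^1(K, E)$, where $\xi_i$ denotes the class of $X_i$. Because the $\xi_i$ lie in an abelian group with coprime orders $P_1$ and $P_2$, their sum has order $P_1 P_2$, so by the alternate definition of period recalled above, $P(X) = P_1 P_2$.

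The substantive task is to compute $I(X)$. I would first observe that a finite extension $L/K$ splits $X$ if and only if it splits both $X_1$ and $X_2$. The forward implication uses that, by the coprimality of $P_1$ and $P_2$, each $\xi_i$ is an integer multiple of $\xi$ (explicitly, the multipliers coming from the Chinese remainder theorem), so $\xi|_L = 0$ forces $\xi_i|_L = 0$; the converse is immediate from $\xi = \xi_1 + \xi_2$. Consequently every splitting field of $X$ is a splitting field of $X_i$, so $I_i$ divides $I(X)$ for each $i$. Since $I_i \mid P_i^2$ by Theorem~\ref{thm:licht}, the integers $I_1$ and $I_2$ are coprime, which yields $I_1 I_2 \mid I(X)$.

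For the reverse divisibility, given finite extensions $L_i/K$ splitting $X_i$ with degrees $d_i$, the composite $L_1 L_2$ splits both and hence splits $X$, with degree dividing $d_1 d_2$. Thus $I(X) \mid d_1 d_2$ for every such pair, so $I(X)$ divides the gcd of all such products. A prime-by-prime check shows that this gcd equals $I_1 I_2$: for each prime $p$, $v_p(d_1 d_2) = v_p(d_1) + v_p(d_2)$, with the two summands minimized independently over the splitting fields of $X_1$ and $X_2$, so the minimum of the sum equals $v_p(I_1) + v_p(I_2) = v_p(I_1 I_2)$. Combining both divisibilities gives $I(X) = I_1 I_2$. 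The only step requiring genuine care is this prime-by-prime identity; the remainder is routine bookkeeping of how splittings of $\xi$ decompose across its coprime summands, with no serious obstacle anticipated.
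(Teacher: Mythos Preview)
Your argument is correct and follows the same route as the paper: define $X$ by $\xi=\xi_1+\xi_2$, read off the period from coprimality of the orders, and compute the index by showing that a field splits $\xi$ iff it splits both $\xi_i$, then use composita $L_1L_2$ for the reverse divisibility. The only difference is cosmetic: you spell out the prime-by-prime verification that $\gcd_{L_1,L_2}\,[L_1:K][L_2:K]=I_1I_2$, whereas the paper simply asserts ``one sees that $I\mid I_1I_2$''.
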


\begin{proof}
  Let $\xi_i \in \Hp^1(K, E)$ represent $X_i$. I claim that $\xi := \xi_1 + \xi_2$ has period $P_1P_2$ and index $I_1I_2$.

Since the period is the order of $\xi$ in $\Hp^1(K, E)$, the first part of the claim is obvious. Now suppose that $\xi$ has index $I$. If $L$ is a finite extension of $K$ which splits $\xi$, then $\xi$ lies in the kernel of the restriction map $\res: \Hp^1(K, E) \to \Hp^1(L, E)$. Since the orders of the $\xi_i$ are relatively prime and $\res$ is a homomorphism, $L$ splits both of the $\xi_i$ as well. Therefore $I_1I_2$ divides $I$. On the other hand, any field which splits both of the $\xi_i$ splits $\xi$ as well. In particular, we can choose fields of the form $L_1\cdot L_2$ where each $L_i$ splits $\xi_i$. Varying over all such choices, one sees that $I \mid I_1I_2$.
\end{proof}

\subsection{O'Neil's obstruction map}
\label{sec:oneils-obstr-map}

Our main tool for computing the index is O'Neil's obstruction map. In order to define it, we first construct a \emph{theta group}. Let $E[n]$ be the $n$-torsion points of the elliptic curve $E$, viewed as a group scheme over $K$. Then our theta group is a central extension $\sG(n)$ in the category of group schemes over $K$ given by the exact sequence
\begin{equation}
    \label{eq:theta}
0 \to \G_m \to \sG(n) \to E[n] \to 0,
\end{equation}
where the commutator is given by the Weil pairing. That is, $\sG(n)$ as a scheme is just $\G_m \times E[n]$, but with multiplication satisfying
\[
(x, S) (y, T) (x,S)^{-1}(y,T)^{-1}= (e_n(S,T), O),
\]
where $e_n$ is the Weil pairing on $E[n]$ and $O$ is the identity of $E$; see~\cite[\S 23]{mumford1985} or~\cite{mumford1966} for more details. Taking (nonabelian) Galois cohomology, one obtains a map
\[
\Ob: \Hp^1(K, E[n]) \to \Hp^2(K, \G_m) = \Br K.
\]
This is the obstruction map. Note that it is a \emph{quadratic} map, not a homomorphism. (Quadratic essentially means that $\Ob(\xi) = b(\xi,\xi)$ for some bilinear map $b$.) This results from the fact that $\sG(n)$ is a noncommutative group scheme, and hence our cohomology sequence takes place in nonabelian Galois cohomology; see Zarhin~\cite{zarhin1974}.

Recall the Kummer sequence for $E$ gives rise to the exact sequence in Galois cohomology
\begin{equation}
  \label{eq:kummer-E}
0 \to E(K)/nE(K) \to \Hp^1(K, E[n]) \to \Hp^1(K, E)[n] \to 0.
\end{equation}
Thus, if $X$ has period dividing $n$, there exist (usually many) $\xi \in \Hp^1(K, E[n])$ representing it; that is, the image of $\xi$ in $\Hp^1(K, E)[n]$ represents $X$ in the usual sense.

In~\cite{oneil2002}, O'Neil showed
\begin{proposition}\label{prop:ob-index}
  The image of the obstruction map lies in $(\Br K)[n]$. If $\Ob(\xi)$ has order $\ell$, then the index of the curve $X$ represented by $\xi$ divides $n\ell$. If $X$ has index $n$, then there is a class $\xi$ representing $X$ such that $\Ob(\xi) = 0$.
\end{proposition}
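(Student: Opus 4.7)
The plan is to exploit the interpretation of $\Hp^1(K, E[n])$ as classifying pairs $(X, \mathcal{L})$, where $X$ is a principal homogeneous space for $E$ and $\mathcal{L}$ is a $K$-rational invertible sheaf on $X$ of degree $n$, and to identify $\Ob(\xi)$ with the Brauer class of a Severi--Brauer variety built from the complete linear system $|\mathcal{L}|$.

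For the first assertion, observe that the commutator in $\sG(n)$ is the Weil pairing $e_n$, which takes values in $\mu_n \subset \G_m$. Hence $\sG(n)$ is the pushout along $\mu_n \hookrightarrow \G_m$ of a finite central extension
\[
0 \to \mu_n \to \mathcal{H}(n) \to E[n] \to 0.
\]
Consequently $\Ob$ factors through the boundary map $\Hp^1(K, E[n]) \to \Hp^2(K, \mu_n)$, and the inclusion $\Hp^2(K, \mu_n) \hookrightarrow \Hp^2(K, \G_m) = \Br K$ arising from Hilbert 90 applied to the Kummer sequence has image exactly $(\Br K)[n]$.

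For the second assertion, suppose $n \geq 3$, so that $\mathcal{L}$ is very ample on $\Xbar$. Then $\Hp^0(\Xbar, \mathcal{L})$ is an $n$-dimensional $\Kb$-vector space and the linear system gives a closed immersion of $\Xbar$ into $\mathbb{P}(\Hp^0(\Xbar,\mathcal{L})^*) \cong \mathbb{P}^{n-1}_{\Kb}$. Galois descent of this ambient projective space produces a Severi--Brauer variety $V$ of dimension $n-1$ whose Brauer class is $\Ob(\xi)$, and which contains $X$ as a $K$-subvariety. When $\Ob(\xi)$ has order $\ell$, the low-degree terms of the Leray sequence for $V \to \Spec K$ with coefficients in $\G_m$ show that $\mathcal{O}_V(\ell)$ descends to a $K$-rational invertible sheaf; its nonzero global sections cut out an effective $K$-rational divisor on $V$ of degree $\ell$ relative to $\mathcal{O}_V(1)$, whose intersection with $X$ is a $K$-rational divisor of degree $n\ell$. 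Hence $I(X) \mid n\ell$. The cases $n \leq 2$ are handled directly: $n=1$ is trivial since $\Hp^1(K, E[1]) = 0$, while for $n=2$ the pair $(X, \mathcal{L})$ gives a degree-2 map to a conic $V$ whose Brauer class is $\Ob(\xi)$.

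For the third assertion, suppose $I(X) = n$ and fix a rational divisor $D$ on $X$ of degree $n$. Then $\mathcal{L} := \mathcal{O}_X(D)$ is an honest $K$-rational invertible sheaf, and the associated class $\xi \in \Hp^1(K, E[n])$ maps to $[X]$ in $\Hp^1(K, E)[n]$; for $n \geq 3$, the linear system $|\mathcal{L}|$ supplies a $K$-rational embedding $X \hookrightarrow \mathbb{P}^{n-1}_K$, so the associated Severi--Brauer variety is split and $\Ob(\xi) = 0$. The main obstacle will be rigorously justifying the identification of $\Ob(\xi)$ with the Brauer class of the Severi--Brauer variety attached to $(X, \mathcal{L})$: this requires verifying that lifting the cocycle $\xi$ into $\sG(n)$ recovers precisely the descent cocycle in $\mathrm{PGL}_n$ for $\mathbb{P}(\Hp^0(\Xbar, \mathcal{L})^*)$, and this in turn rests on the natural action of $\sG(n)$ on the global sections of $\mathcal{L}$ by translation, as developed by Mumford.
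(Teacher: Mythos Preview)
Your approach is essentially identical to the paper's: both rest on O'Neil's identification of $\Ob(\xi)$ with the Brauer class of the Severi--Brauer variety attached to the pair $(X,\mathcal{L})$, and both deduce the second and third assertions from that interpretation in exactly the way you describe. The paper's own argument is really a sketch that cites O'Neil for the details, so your write-up is if anything more explicit (and your use of the Leray sequence to descend $\mathcal{O}_V(\ell)$ is cleaner than the paper's slightly loose phrase ``split by an extension of degree~$\ell$'').

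One small gap worth flagging: in your argument for the first assertion, the fact that the commutator of $\sG(n)$ lands in $\mu_n$ does not by itself imply that $\sG(n)$ is the pushout of a central extension by~$\mu_n$. A central extension can have commutator contained in a proper subgroup of the kernel without being a pushout from that subgroup (e.g.\ $0 \to \Z/4 \to \Z/8 \to \Z/2 \to 0$ has trivial commutator but is not a pushout from $0$). For the theta group the pushout statement is true, but it comes from Mumford's explicit construction rather than from the commutator alone. A cheaper route, once you already have the Severi--Brauer interpretation, is to observe that a Severi--Brauer variety of dimension $n-1$ corresponds to a central simple algebra of degree~$n$, so its Brauer class has index dividing~$n$ and hence order dividing~$n$.
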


The idea is that $\Hp^1(K, E[n])$ classifies principal homogeneous spaces $X$ for $E$ over $K$ \emph{along with} a choice of Galois-invariant degree $n$ invertible sheaf $\sL \in (\Pic \Xbar)^G$, up to isomorphism over $K$. Specifically, any cocycle in the class of $\xi$ gives rise to a $\Kb$-isomorphism $\phi:E \to X$. Then $\sL = \sL(n\phi(O))$. One sees that $\Gamma(\Xbar, \sL)/\Kb^\times$ forms a twist of $\Pro^{n-1}$ over $K$---that is, a Brauer-Severi variety. The obstruction map takes the pair $(X, \sL)$ to the class of this Brauer-Severi variety in $\Br K$. From this characterization, the proposition is easy: If $X$ has index $n$, then there is a rational divisor $D$ of degree $n$. The class $\xi$ representing $(X, \sL(D))$ satisfies $\Ob(\xi) = 0$. If $\Ob(\xi)$ has order $\ell$ and $\xi$ represents the pair $(X, \sL)$, then the Brauer-Severi variety arising from $\sL$ is split by an extension of degree $\ell$. Then the tensor product $\sL^\ell$ is a class of degree $n\ell$ containing a rational divisor. See~\cite{oneil2002} for more details.

One would hope that the converse held: if $X$ has index $n\ell$, then there ought to be $\xi$ representing $X$ such that $\Ob(\xi)$ has order $\ell$. However, this is not known to be true. We will use a trick in proving the main theorem to construct $X$ for which this does hold.


Let $\delta$ be the composition $E(K) \to E(K)/nE(K) \to \Hp^1(K, E[n])$, where the latter map is given by the map in~\eqref{eq:kummer-E}. For $x \in E(K)$ and $X$ a principal homogeneous space for $E$ over $K$, let $T(x, X)$ denote the Tate pairing of $X$ with the image of $x$ in $E(K)/nE(K)$. Then

\begin{proposition} \label{prop:ob-tate}
  $\Ob (\xi + \delta x) = \Ob(\xi) + T(x,X)$.
\end{proposition}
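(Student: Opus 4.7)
The plan is to exploit that $\Ob$ is \emph{quadratic}, decompose $\Ob(\xi + \delta x)$ via its polarization, and recognize each resulting term. The general theory of obstruction maps arising from central extensions of group schemes (see Zarhin~\cite{zarhin1974}) shows that the symmetric bilinear form associated to the quadratic map $\Ob$ is the cup product induced by the commutator of $\sG(n)$, which by the very construction of the theta group is the Weil pairing $e_n : E[n] \times E[n] \to \mu_n \subset \G_m$. Writing this cup product as $\cup_e : \Hp^1(K, E[n]) \times \Hp^1(K, E[n]) \to \Hp^2(K, \mu_n) \subset \Br K$, we obtain
\[
\Ob(\xi + \delta x) = \Ob(\xi) + \Ob(\delta x) + \xi \cup_e \delta x.
\]

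Next I would show that the middle term vanishes. Under the geometric description of $\Hp^1(K, E[n])$ as classifying pairs $(X', \sL')$ up to isomorphism, the class $\delta x$ corresponds to the trivial torsor $E$ together with the line bundle $\sL(n[y])$ for a chosen $y \in E(\Kb)$ with $ny = x$. Using the group-law identity $n[y] \sim [x] + (n-1)[O]$ in $\Pic E$, this sheaf is isomorphic to $\sL([x] + (n-1)[O])$, whose defining divisor is rational. Its space of global sections is therefore a genuine $n$-dimensional $K$-vector space, so the associated Brauer-Severi variety is the split $\Pro^{n-1}_K$ and $\Ob(\delta x) = 0$.

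It then remains to identify the polarization term $\xi \cup_e \delta x$ with $T(x, X)$. This is essentially the definition of the Galois-cohomological Tate pairing on $E(K)/nE(K) \times \Hp^1(K, E)[n]$: one lifts $X$ to an arbitrary $\xi \in \Hp^1(K, E[n])$, takes the Kummer image $\delta x \in \Hp^1(K, E[n])$, and forms the Weil-pairing cup product in $\Hp^2(K, \mu_n) \hookrightarrow (\Br K)[n]$. (Independence of the lift $\xi$ is the standard check that $\delta x \cup_e \delta y = 0$ when both classes come from $E(K)$, which we already have available since the preceding paragraph implies the corresponding obstruction vanishes.)

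The main obstacle is the first step, namely verifying that the polarization of the quadratic obstruction attached to a noncommutative central extension is exactly the cup product of the commutator pairing of that extension. This is a fairly general fact about the boundary map $\Hp^1(K, E[n]) \to \Hp^2(K, \G_m)$ arising from a nonabelian sequence $1 \to \G_m \to \sG(n) \to E[n] \to 1$, and is proven by a direct $2$-cocycle calculation: one picks a set-theoretic section of $\sG(n) \to E[n]$, computes the resulting normalized $2$-cocycle $c(S,T)$ with values in $\G_m$, and checks that its antisymmetrization $c(S,T)c(T,S)^{-1}$ recovers $e_n(S,T)$. Rather than reproduce this calculation, I would defer to the cited reference and merely remark on the compatibility of normalizations.
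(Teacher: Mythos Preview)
The paper does not give its own proof; it simply cites \cite[\S 5]{oneil2002}. Your argument is correct and is essentially the one found there: use Zarhin's identity $\Ob(\xi+\eta)=\Ob(\xi)+\Ob(\eta)+\xi\cup_e\eta$ for the boundary of a central extension, kill $\Ob(\delta x)$ by exhibiting the rational divisor $[x]+(n-1)[O]$ linearly equivalent to $n[y]$, and identify the cross term with the cohomological Tate pairing via the Weil-pairing cup product.
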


\begin{proof}
  See~\cite[\S 5]{oneil2002}.
\end{proof}

O'Neil's introduction of the obstruction map is useful because in many cases it may be computed using a Hilbert symbol, which we now define. Let $K$ be a field of characteristic not dividing $n$ which contains $\mu_n$, the $n$th roots of unity.  Let $a, b \in K^\times/K^{\times n}$. By Kummer theory, we know that $K^\times/K^{\times n} = \Hp^1(K, \mu_n)$. The cup product gives a map
\[
\kkn \to \Hp^2(K, \mu_n \otimes \mu_n).
\]
Fix a primitive $n$th root of unity $\zeta$. Define an isomorphism $\mu_n \otimes \mu_n \to \mu_n$ by $\zeta^i \otimes \zeta^j \mapsto \zeta^{ij}$, which induces an isomorphism $\Hp^2(K, \mu_n\otimes\mu_n) \to \Hp^2(K, \mu_n)$. Using the fact that $\Hp^2(K, \mu_n) = (\Br K)[n]$, we see that the composition gives a pairing 
\begin{align*}
\kkn & \to (\Br K)[n] \\
(a,b) & \mapsto \langle a,b\rangle
\end{align*}
which we call the Hilbert symbol. If we let $w = (a,b)$, then we will alternatively write $\langle w\rangle$ for $\langle a,b\rangle$. Since the cup product is bilinear and skew-symmetric, so is the Hilbert symbol. 

Note that the Hilbert symbol depends on $n$ and $\zeta$. Frequently we will abuse notation and define the Hilbert symbol as a map $(K^\times)^2 \to \Br K$, by composing with the obvious quotient map.

We return to the obstruction map. Assume that the $n$-torsion of $E$ is rational over $K$. By the theory of the Weil pairing, $\mu_n \subset K$. Let $\zeta$ be the previously chosen primitive $n$th root of unity. Fix a basis $(S,T)$ for $E[n]$ such that $e(S, T) = \zeta$, where $e$ is the Weil pairing.. The choice of basis, and our fixed generator $\zeta$ of $\mu_n$, yields an isomorphism of (trivial) Galois-modules $E[n] \isom \mu_n \times \mu_n$, and we have an isomorphism
\[
\kappa: \Hp^1(K, E[n]) \to \kkn.
\]

\begin{proposition}\label{prop:ob-hilb}
    Let $\xi \in \Hp^1(K, E[n])$. If $n$ is odd or $E[2n] \subset E(K)$, then $\Ob(\xi) = \langle \kappa(\xi)\rangle$. If $n$ is even, then $2\Ob(\xi) = 2\langle \kappa(\xi) \rangle$.
\end{proposition}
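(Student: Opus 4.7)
The plan is to realize $\sG(n)$ as a concrete Heisenberg extension of $\mu_n \times \mu_n$ by $\G_m$ and then compute $\Ob(\xi)$ by direct cocycle manipulation. First I would use the hypothesis $E[n] \subset E(K)$, which forces $\mu_n \subset K$ by nondegeneracy of the Weil pairing, together with the basis $(S, T)$ to identify $E[n] \cong \mu_n \times \mu_n$ as Galois modules. A cocycle for $\xi$ then has the form $\xi_\sigma = (a_\sigma, b_\sigma)$, and Kummer theory produces $\alpha, \beta \in \Kb^\times$ with $\alpha^n = a$, $\beta^n = b$ and $a_\sigma = {}^\sigma\alpha/\alpha$, $b_\sigma = {}^\sigma\beta/\beta$, so that $\kappa(\xi) = (a, b) \in \kkn$.

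Next I would introduce the standard Heisenberg model $H$ whose underlying scheme is $\G_m \times \mu_n \times \mu_n$ and whose product is
\[
(z, a, b) \cdot (z', a', b') = \bigl(zz'\,\chi(a, b'),\, aa',\, bb'\bigr),
\]
where $\chi(\zeta^i, \zeta^j) := \zeta^{ij}$ is a well-defined biadditive pairing $\mu_n \times \mu_n \to \mu_n$. A short check gives the commutator in $H$ as $\chi(a, b')\chi(a', b)^{-1} = e_n((a,b),(a',b'))$, so $H$ and $\sG(n)$ are both central extensions of $E[n]$ by $\G_m$ with the same commutator.

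Granting an extension-preserving isomorphism $\sG(n) \cong H$ over $K$, the obstruction is pure cocycle bookkeeping. Lift $\xi_\sigma$ to $\tilde\xi_\sigma := (1, a_\sigma, b_\sigma) \in H(\Kb)$ and evaluate the coboundary $d\tilde\xi(\sigma, \tau) = \tilde\xi_\sigma \cdot {}^\sigma\tilde\xi_\tau \cdot \tilde\xi_{\sigma\tau}^{-1}$. Because $\mu_n \subset K$ the Galois action on the $\mu_n$-coordinates is trivial; using the cocycle relations $a_{\sigma\tau} = a_\sigma a_\tau$ and $b_{\sigma\tau} = b_\sigma b_\tau$, together with bilinearity of $\chi$, the coboundary collapses to $\chi(a_\sigma, b_\tau) \in \mu_n$. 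But this $2$-cocycle is by construction a representative of the cup product of the Kummer classes of $a$ and $b$ under $\mu_n \otimes \mu_n \cong \mu_n$, $\zeta \otimes \zeta \mapsto \zeta$, hence of $\langle a, b\rangle = \langle\kappa(\xi)\rangle$.

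The main obstacle is justifying $\sG(n) \cong H$ as central extensions over $K$. Two central extensions of $E[n]$ by $\G_m$ with the same commutator differ by a symmetric (abelian) extension, and its class is detected by the $n$-th power map on lifts of $n$-torsion: a short induction in $H$ gives $(1, \zeta^i, \zeta^j)^n = (\zeta^{ij \cdot n(n-1)/2}, 1, 1)$, which is trivial when $n$ is odd but equals $(-1)^{ij}$ when $n$ is even. The analogous $n$-th powers in $\sG(n) = \sG(\sL(nO))$ are governed by Mumford's theta-group formalism \cite{mumford1966}; matching them to those of $H$ over $K$ is automatic for $n$ odd, while for $n$ even it amounts to exhibiting $K$-rational halves of $S$ and $T$, i.e.\ $E[2n] \subset E(K)$. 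When this data is missing, the discrepancy has exponent dividing $2$, giving only $2\Ob(\xi) = 2\langle\kappa(\xi)\rangle$.
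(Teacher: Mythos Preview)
The paper does not prove this result; it simply defers to \cite{oneil2002,oneil2004} for odd $n$ and to \cite{clark-sharif} for even $n$. Your proposal is precisely the strategy those references implement: replace $\sG(n)$ by the explicit Heisenberg extension $H$ and read off the nonabelian coboundary as the cup-product cocycle $(\sigma,\tau)\mapsto\chi(a_\sigma,b_\tau)$. That computation is correct, and you have rightly isolated the identification $\sG(n)\cong H$ over $K$ as the only substantive point.

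Your justification of that identification is where the write-up is thin. As stated, ``the $n$-th power map on lifts'' is not a well-defined invariant of a $\G_m$-extension, since replacing a lift $\tilde x$ by $c\tilde x$ changes $\tilde x^{\,n}$ by $c^{\,n}$. For $n$ odd this is repaired by noting that $g\mapsto g^{n}$ is a genuine homomorphism on both $H$ and $\sG(n)$ (the correction $[h,g]^{\binom{n}{2}}$ dies in $\mu_n$ since $n\mid\binom{n}{2}$), so one may pass to the $n$-torsion subgroups, which are extensions of the \emph{constant} group $E[n]$ by the \emph{constant} group $\mu_n$; there the $n$-th power map $E[n]\to\mu_n$ is well defined and, together with the commutator, determines the extension class, hence yields a $K$-isomorphism once the two maps agree. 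You check triviality for $H$; on the $\sG(n)$ side the input you need from \cite{mumford1966} is the existence of the involution $\delta_{-1}$ on $\sG(\sL(nO))$ coming from symmetry of $\sL(nO)$, together with symmetric lifts, whose $n$-th powers are then forced into $\mu_2\cap\mu_n=\{1\}$. For even $n$ with $E[2n]\subset E(K)$, ``exhibiting halves of $S,T$'' is the right intuition but not yet an argument; the actual mechanism, and the explicit $2$-torsion discrepancy $\Ob(\xi)-\langle\kappa(\xi)\rangle$ in the general even case, is exactly what \cite{clark-sharif} supplies.
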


\begin{proof}
    For the case that $n$ is odd, see~\cite[Prop.~3.4]{oneil2002} and \cite{oneil2004}. A proof in the even case, including an explicit computation of $\Ob(\xi) - \langle \kappa(\xi) \rangle$, can be found in~\cite[\S 2]{clark-sharif}.
\end{proof}

Assume from now on that $K$ is a number field. If $v$ is a place of $K$, write $K_v$ for the completion of $K$ at $v$. In order to use the Hilbert symbol, we will reduce to the local case using the fact that, if $K$ is a global field, $\Br K = \oplus_v \Br K_v$. That is, in order to compute $\langle a, b\rangle$, it suffices to compute $\langle a, b\rangle_v$, where the latter symbol is computed in $K_v$, and $a,b$ are considered as elements of $K_v$.

(More generally, for any $K$-group scheme $M$ and integer $q$, we have a natural localization map
\[
\Hp^q(K, M) \to \Hp^q(K_v, M)
\]
in \'etale cohomology. We will denote this map by adding the subscript $v$; e.g.\ $\xi \mapsto \xi_v$.)

If $v$ is a place of $K$, we also use $v$ to denote a fixed corresponding valuation.
\begin{lemma}\label{lem:hilb-local-fields}
    Let $K_v$ be a nonarchimedean local field such that $v(n) = 0$. Let $\pi$ be a uniformizing parameter and $u, u'$ units; that is, $v(u) = v(u') = 0$. Let $\F$ be the residue field of $K_v$.
    \begin{enumerate}
        \item $\langle u, u'\rangle_v = 0$.
        \item The order of $\langle u, \pi \rangle_v$ equals the order of the image of $u$ in $\F^\times/\F^{\times n}$.
    \end{enumerate}
\end{lemma}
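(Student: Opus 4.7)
The plan is to exploit the unramifiedness of the Kummer classes of units for part (1), and to identify the Hilbert symbol with the invariant of a cyclic algebra for part (2).

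For part (1), the key observation is that for any unit $w \in \mathcal{O}_v^\times$, the polynomial $x^n - w$ has separable reduction over $\F$ (since $v(n) = 0$ means $n$ is invertible in $\F$, while $\bar w \neq 0$), so Hensel's lemma implies $K_v(\sqrt[n]{w})/K_v$ is unramified. Hence the Kummer classes of $u$ and $u'$ in $\Hp^1(K_v, \mu_n) = K_v^\times/K_v^{\times n}$ both lie in the image of the inflation map from $\Hp^1(\Gal(K_v^{\mathrm{un}}/K_v), \mu_n)$, where $K_v^{\mathrm{un}}$ denotes the maximal unramified extension of $K_v$. Since cup product commutes with inflation, $\langle u, u'\rangle_v$ is inflated from a class in $\Hp^2(\hat{\Z}, \mu_n \otimes \mu_n)$, which vanishes because $\hat{\Z}$ has cohomological dimension one.

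For part (2), let $d$ be the order of $\bar u$ in $\F^\times/\F^{\times n}$. The same Hensel argument gives an isomorphism $\mathcal{O}_v^\times/\mathcal{O}_v^{\times n} \to \F^\times/\F^{\times n}$, so $u$ also has order $d$ in $K_v^\times/K_v^{\times n}$. Bilinearity then yields $d \langle u, \pi\rangle_v = \langle u^d, \pi\rangle_v = 0$, so the order of $\langle u, \pi\rangle_v$ divides $d$. For the reverse inequality, identify $\langle u, \pi\rangle_v$ with the Brauer class of the cyclic algebra $(L/K_v, \sigma, \pi)$, where $L = K_v(\sqrt[n]{u})$ is unramified of degree exactly $d$ (by Kummer theory and the Hensel argument) and $\sigma$ is a Frobenius generator of $\Gal(L/K_v)$. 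The standard Hasse invariant formula for cyclic algebras over unramified extensions then gives $\mathrm{inv}_v \langle u, \pi\rangle_v = v(\pi)/d = 1/d \pmod{\Z}$, which has order exactly $d$ in $\Q/\Z \cong \Br K_v$.

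The main technical point is the identification of $\langle u, \pi\rangle_v$ with the cyclic algebra $(L/K_v, \sigma, \pi)$. When $d = n$, this is immediate from the explicit definitions of the Hilbert symbol and the cyclic algebra via cup products. The general case $d < n$ reduces to this one: since $u^d$ is an $n$-th power, one writes $u = z^{n/d}\eta$ with $\eta \in \mu_d \subset K_v^\times$, and bilinearity together with the compatibility of Hilbert symbols at different exponents expresses $\langle u, \pi\rangle_{n,\zeta}$ via a Hilbert symbol with $d$-th roots of unity. Both this identification and the invariant formula are classical local class field theory; see for instance Serre, \emph{Local Fields}, Ch.\ XIV, \S 2.
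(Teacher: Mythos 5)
Your proof is correct, and it reaches the lemma by a genuinely different (though equally classical) route. The paper's argument runs everything through a single criterion from Serre, Ch.~XIV: $\langle a,b\rangle_v=0$ iff $b$ is a norm from $K_v(a^{1/n})$; since $K_v(u^{1/n})$ is unramified of degree $d$ (the order of $\bar u$ in $\F^\times/\F^{\times n}$, by Hensel) and norms from an unramified degree-$d$ extension are exactly the elements of valuation divisible by $d$, part (1) is immediate ($u'$ has valuation $0$) and part (2) follows from bilinearity ($m\langle u,\pi\rangle_v=\langle u,\pi^m\rangle_v=0$ iff $d\mid m$). You instead prove (1) by observing that both Kummer classes are inflated from the unramified quotient $\hat{\Z}$, whose cohomological dimension kills the cup product, and (2) by computing the Hasse invariant of the cyclic algebra $(L/K_v,\sigma,\pi)$. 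Both of your steps are sound: the inflation argument is clean and arguably more conceptual for (1), and the cyclic-algebra computation for (2) yields the exact invariant $a/d$ with $\gcd(a,d)=1$ (not necessarily $1/d$ on the nose -- the precise numerator depends on matching the Frobenius generator with the chosen root of unity $\zeta$ -- but this does not affect the order). The paper's route is more economical in that one norm computation handles both parts at once; yours gives slightly more information in part (2) and avoids invoking the norm criterion. Your reduction of the cyclic-algebra identification from exponent $n$ to exponent $d$ is stated somewhat tersely, but the compatibility of Hilbert symbols across exponents that it relies on is standard and is covered by the same reference the paper uses.
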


\begin{proof}
  According to~\cite[Ch.\ XIV]{serrelocalfields}, $\langle a, b\rangle_v = 0$ if and only if $b$ is a norm from the extension $K_v(a^{1/n})/K_v$. The extension $K_v(u^{1/n})$ is unramified with degree, say, $d$, which by Hensel's Lemma also equals the order of $u$ in $\F^\times/\F^{\times n}$. According to local class field theory, the norm from an unramified extension of degree $d$ is precisely the set of elements $x$ such that $v(x)$ is a multiple of $dv(\pi)$. The result follows immediately for the first claim, and from bilinearity for the second claim.
\end{proof}

We also have the following:
\begin{proposition} \label{prop:product-formula}
    For $a,b\in K^\times$, $\displaystyle\sum_v \langle a, b\rangle_v = 0$.
\end{proposition}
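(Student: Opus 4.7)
The plan is to recognize the stated identity as a direct consequence of the fundamental exact sequence of global class field theory, once one sees the Hilbert symbol as the localization of a single global cohomology class.

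First I would produce the global object: by Kummer theory, $a$ and $b$ determine classes $[a],[b]\in\Hp^1(K,\mu_n)$, and their cup product lies in $\Hp^2(K,\mu_n\otimes\mu_n)$; the fixed primitive root $\zeta$ then gives a class $\langle a,b\rangle \in \Hp^2(K,\mu_n) = (\Br K)[n]$. This construction is natural with respect to base change $K\hookrightarrow K_v$, so localization sends $\langle a,b\rangle$ to the corresponding Hilbert symbol $\langle a,b\rangle_v$ computed in $K_v$. Thus the tuple $(\langle a,b\rangle_v)_v$ is in the image of $\Br K$ under the canonical map $\Br K \to \bigoplus_v \Br K_v$; in particular it has only finitely many nonzero components (already enough to make the sum in the statement well-defined).

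Next I would invoke the Hasse--Brauer--Noether reciprocity law, i.e.\ the exactness of
\begin{equation*}
0 \to \Br K \to \bigoplus_v \Br K_v \xrightarrow{\,\sum \mathrm{inv}_v\,} \Q/\Z \to 0,
\end{equation*}
which is the central theorem of global class field theory. Applied to the global class $\langle a,b\rangle$, it gives $\sum_v \mathrm{inv}_v\langle a,b\rangle_v = 0$ in $\Q/\Z$. Under the identifications used in the rest of Section~\ref{sec:oneils-obstr-map}---namely, identifying each local Brauer group with (a subgroup of) $\Q/\Z$ via the invariant map---this is precisely the asserted product formula.

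The only nontrivial input is the reciprocity law itself; the rest of the proof is purely formal, consisting of checking that the Hilbert symbol as defined via cup product commutes with the restriction maps to completions, which is immediate from the functoriality of cup product in \'etale cohomology. So there is no real obstacle beyond citing class field theory; I would simply reference a standard source (e.g.\ Serre's \emph{Local Fields}, Chapter~XIV, together with Tate's account of global class field theory).
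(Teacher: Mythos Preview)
Your proposal is correct and follows essentially the same approach as the paper: both invoke the global class field theory description of $\Br K$ inside $\bigoplus_v \Br K_v$ (the Hasse--Brauer--Noether reciprocity law) together with the compatibility of cup products, hence Hilbert symbols, with localization. The paper's proof is just a terser statement of exactly what you wrote.
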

Here, we use the fact that $\Br K_v$ is canonically isomorphic to $\Q/\Z$, $\frac12 \Z/\Z$, or $0$ via the \emph{invariant map} so that the sum makes sense. For the remainder of this paper, we identify $\Br K_v$ with the appropriate subgroup of $\Q/\Z$. Note that the sum is finite: there are finitely many archimedean primes, and for all but finitely many nonarchimedean $v$, $v(a) = v(b) = v(n) = 0$, so that by Lemma~\ref{lem:hilb-local-fields}, $\langle a,b\rangle_v = 0$. 

\begin{proof}
    This follows from the fact that $\Br K$ is isomorphic to the set of $(t_v)$, $t_v \in \Br K_v$, such that almost all $t_v =0$ and $\sum t_v = 0$; and the fact that cup products, and hence the Hilbert symbol, commute with localization.
\end{proof}

In order to deal with the fact that the obstruction map does not necessarily give a sharp bound on index, we will use obstruction maps of different levels. We define $\Ob_n$ to be the corresponding obstruction map $\Hp^1(K, E[n]) \to \Br K$.

\begin{proposition}\label{prop:ob-different-levels}
    Let $n$ and $m$ be positive integers. The following diagrams commute:
    \begin{enumerate}
        \item 
\[
\xymatrix{\Hp^1(K, E[n]) \ar[rr]^{\Ob_n} \ar[d]_{j_*} & & \Br K \ar[d]^{m}\\
  \Hp^1(K, E[mn]) \ar[rr]^{\Ob_{mn}} & & \Br K
}
\]
where $j_*$ is induced by the canonical inclusion $j: E[n] \to E[mn]$, and $m$ is muliplication by $m$.
        \item
\[
\xymatrix{\Hp^1(K, E[mn]) \ar[rr]^{\Ob_{mn}} \ar[d]_{[m]} & & \Br K \ar[d]^{m}\\
  \Hp^1(K, E[n]) \ar[rr]^{\Ob_{n}} & & \Br K
}
\]
where $[m]$ is the map induced by the multiplication by $m$ map $E[mn] \to E[n]$.
        \item 
\[
\xymatrix{\Hp^1(K, E[n]) \ar[rr]^{\Ob_n} \ar[d]_{\res} & & \Br K \ar[d]^{\res}\\
  \Hp^1(L, E[n]) \ar[rr]^{\Ob_{n}} & & \Br L
}
\]
where $L/K$ is a field extension and $\res$ is the restriction map.
\end{enumerate}
\end{proposition}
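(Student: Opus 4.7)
The plan is to deduce each diagram by functoriality of the obstruction coboundary applied to an appropriate morphism of theta-group extensions. Recall that $\Ob_N$ is obtained as the nonabelian coboundary attached to the central extension $0 \to \G_m \to \sG(N) \to E[N] \to 0$. A commutative diagram of such extensions with vertical maps $\alpha: \G_m \to \G_m$, $\beta: \sG(N') \to \sG(N)$, and $\gamma: E[N'] \to E[N]$ induces a square relating $\Ob_{N'}$ and $\Ob_N$, with $\alpha_*$ on $\Br K$ and $\gamma_*$ on $\Hp^1$. Although $\Ob$ is quadratic rather than linear, it has the form $\xi \mapsto b(\xi,\xi)$ for a bilinear $b$, so naturality under maps of central extensions still holds.

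Part (3) is immediate: base change of \eqref{eq:theta} from $K$ to $L$ gives such a morphism of theta groups (all three vertical arrows being the canonical restrictions), and restriction in \'etale cohomology is functorial. For parts (1) and (2), the key input is the standard compatibility of Weil pairings: for $S, T \in E[n] \subseteq E[mn]$ we have $e_{mn}(S, T) = e_n(S, T)^m$, and for $P, Q \in E[mn]$ we have $e_n(mP, mQ) = e_{mn}(P, Q)^m$. Both are easily verified; the first is immediate from bilinearity and the standard compatibility $e_{mn}(S, T) = e_n(mS, T)$, and the second follows by direct computation on a Tate-curve model. Since the commutator of $\sG(N)$ is $e_N$, these identities say exactly that the pullback of $\sG(mn)$ along $j: E[n] \hookrightarrow E[mn]$ is a central extension of $E[n]$ with commutator $e_n^m$, while the pullback of $\sG(n)$ along $[m]: E[mn] \to E[n]$ is a central extension of $E[mn]$ with commutator $e_{mn}^m$.

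Unwinding this produces morphisms of central extensions $\sG(n) \to \sG(mn)$ covering $j$ and $\sG(mn) \to \sG(n)$ covering $[m]$, each restricting to $x \mapsto x^m$ on the central $\G_m$. Functoriality of $\Ob$, together with the fact that $x \mapsto x^m$ on $\G_m$ induces multiplication by $m$ on $\Br K = \Hp^2(K, \G_m)$, then gives the squares in (1) and (2). The main obstacle is constructing these morphisms of theta groups as genuine morphisms of group schemes over $K$. Realizing $\sG(N)$ scheme-theoretically as $\G_m \times E[N]$ with multiplication twisted by a 2-cocycle $\psi_N$ in the sense of Mumford, the Weil-pairing identities above imply that the relevant discrepancy---for instance $\psi_{mn}(j(P), j(Q)) / \psi_n(P, Q)^m$ in case (1)---is a symmetric 2-cocycle and can thus be trivialized by an appropriate choice of scheme-theoretic section, yielding the required morphism; the construction for (2) is analogous.
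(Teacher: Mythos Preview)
Your overall strategy---deduce each square from a morphism of theta-group extensions and functoriality of the nonabelian coboundary---is sound, and for parts (2) and (3) it coincides with the paper: the paper simply cites Mumford for the homomorphism $\sG(mn)\to\sG(n)$ lying over $[m]$ and equal to the $m$th power on $\G_m$, and treats (3) as obvious.

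For part (1) the paper does something different, and your argument has a gap. The paper never constructs a morphism $\sG(n)\to\sG(mn)$; instead it uses the geometric description of $\Ob$. If $\xi$ represents the pair $(X,\sL)$ with $\sL$ the class of $n(x)$, then $j_*\xi$ represents $(X,\sL^{\otimes m})$, and the Leray edge map $(\Pic\Xbar)^{G_K}\to\Br K$ is a group homomorphism; hence $\Ob_{mn}(j_*\xi)=m\cdot\Ob_n(\xi)$ immediately. Your route does reach the same conclusion, but the step ``the discrepancy $\psi_{mn}(jP,jQ)/\psi_n(P,Q)^m$ is a symmetric $2$-cocycle and can thus be trivialized'' is not justified. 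Over $\bar K$ every symmetric $\bar K^\times$-valued $2$-cocycle on a finite abelian group is a coboundary (divisibility of $\bar K^\times$), but the set of trivializations is a torsor under $\Hom(E[n],\G_m)\cong E[n]$, so the obstruction to choosing one Galois-equivariantly lies in $\Hp^1(K,E[n])$, which is typically nonzero. Thus ``symmetric $\Rightarrow$ coboundary over $K$'' is not a general fact. The morphism you want does exist, but for a different reason: realizing $\sG(N)$ as $\sG(\mathcal O_E(NO))$, the map $(x,\phi)\mapsto(x,\phi^{\otimes m})$ gives a $K$-morphism $\sG(n)\to\sG(mn)$ with the required properties. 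Either cite this (it is in Mumford alongside the map you use for (2)) or switch to the paper's Brauer--Severi argument.

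Two minor remarks: the Weil-pairing identities you need follow directly from the standard compatibility $e_{mn}(P,Q)=e_n([m]P,Q)$ for $Q\in E[n]$ together with bilinearity; invoking a Tate-curve model is unnecessary. And your cocycle sketch for part (2) has the same defect as in (1), so it is better to cite Mumford there as the paper does.
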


\begin{proof}
    Let $\xi \in \Hp^1(K, E[n])$ represent the pair $(X, \sL)$, where $\sL$ is the divisor class of $n(x)$ for some $x \in X(\Kb)$. Then $j_*(\xi)$ represents $(X, \sL^m)$. The map which takes elements of $(\Pic \Xbar)^G$ to the class of the associated Brauer-Severi variety is a homomorphism; indeed, it arises from the Leray spectral sequence $\Hp^p(K, \Hp^q(\Xbar, \G_m)) \Rightarrow \Hp^{p+q}(X, \G_m)$, which yields the exact sequence
\[
0 \to \Pic X \to \Hp^0(K, \Pic \Xbar) \to \Hp^2(K, \G_m).
\]
The last map sends an invertible sheaf to the class of its associated Brauer-Severi variety; see for example~\cite{lichtenbaum1968}. The first part of the proposition follows.

For the second part, Mumford showed~\cite[p.\ 309--310]{mumford1966} that multiplication by $m$ extends to a homomorphism on theta groups $\sG(mn) \to \sG(n)$. The restriction of this homomorphism to $\G_m$ is also multiplication by $m$. Taking the long cohomology sequence associated to~\eqref{eq:theta}, we obtain the commutative diagram.

The third part is obvious, since $\res \xi$ represents the same pair $(X, \sL)$ as $\xi$ does, but over $L$. 
\end{proof}

Similar to $\Ob_n$, we define $\delta_n$ to be the composition $E(K) \to E(K)/nE(K) \to \Hp^1(K, E[n])$, where the second map in the composition is the coboundary coming from the Kummer sequence for $E$.

\section{The case of rational torsion}
\label{sec:case-rati-tors}

By Lemma~\ref{lem:prime-decomposition}, we may assume that $n$ is a prime power $p^r$; this assumption holds for the remainder of the paper. In this section, we will prove the theorem under the additional assumption that $E[n] \subset E(K)$ in the case that $p$ is odd, and $E[2n] \subset E(K)$ if $p=2$. 

\subsection{Choosing a pair of primes}
\label{sec:choosing-pair-primes}

Our first step is to find a pair of distinct nonarchimedean primes $v$, $v'$ satisfying certain splitting conditions. (We use primes and places interchangeably.) We state the conditions below, after which we show that infinitely many pairs $v$, $v'$ satisfying the conditions exist. The proof is essentially repeated use of the Cebotarev density theorem. Let $S$ be the union of the primes $w$ of $K$ such that $E$ has bad reduction at $w$, archimedean primes, and primes dividing $n$. The conditions are

\begin{itemize}
    \item[A1.] The primes $v$,$v'$ are principal with totally positive generators $\pi$ and $\pi'$ respectively.
    \item[A2.] Let $E(K)$ embed in $E(K_v)$ in the usual manner. Then $E(K)$ lies in $nE(K_v)$.
    \item[A3.] For each $w\in S$, $\pi$ and $\pi'$ lie in $K_w^{\times n}$.
    \item[A4.] The order of the image of $\pi'$ in $K_v^\times/K_v^{\times n}$ is exactly $n$.
\end{itemize}

\begin{lemma} \label{lem:rational-torsion-pairs-primes}
    There exist infinitely many pairs of distinct primes $v$, $v'$ satisfying conditions A1--A4.
\end{lemma}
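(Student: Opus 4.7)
The plan is to realize A1--A3 as a single splitting condition in a finite Galois extension $F/K$, and then realize A4 as nontriviality of Frobenius in a degree-$p$ cyclic extension $F_p/F$. Chebotarev density, applied twice, then produces infinitely many pairs $(v,v')$.

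To construct $F$, let $K_1$ be the ray class field of $K$ with modulus $\mathfrak{m} := \prod_{w \in S,\, w\text{ real}} w \cdot \prod_{w \in S,\, w\text{ finite}} w^{c_w}$, where each $c_w$ is chosen large enough that $1 + w^{c_w}\mathcal{O}_{K_w} \subset K_w^{\times n}$ (possible by Hensel). A finite prime $v\nmid\mathfrak{m}$ splits completely in $K_1$ iff it is principal with a totally positive generator congruent to $1$ modulo each $w^{c_w}$, which forces A1 and A3 for that prime. Next, let $P_1,\ldots,P_s$ generate the finitely generated group $E(K)$ (Mordell--Weil), choose lifts $Q_i \in E(\Kb)$ with $nQ_i = P_i$, and set $K_2 := K(Q_1,\ldots,Q_s)$. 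Because $E[n] \subset E(K)$, the $\Gal(\Kb/K)$-conjugates of each $Q_i$ differ from $Q_i$ by $K$-rational $n$-torsion, so $K_2/K$ is Galois. For $v \notin S$, $v$ splits completely in $K_2$ iff each $P_i$ lies in $nE(K_v)$, which is A2. Put $F := K_1K_2$, Galois over $K$; any $v \notin S$ splitting completely in $F$ satisfies A1--A3.

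By Chebotarev applied to $F/K$, infinitely many primes of $K$ outside $S$ split completely in $F$. Fix such a prime $v'$, and let $\pi'$ be its totally positive generator with $\pi' \equiv 1 \pmod{\mathfrak{m}}$. For any prime $\mathfrak{P}$ of $F$ above $v'$, the valuation $v_{\mathfrak{P}}(\pi') = 1$ is coprime to $p$, so $\pi'$ is not a $p$-th power in $F$; since $\mu_n \subset F$, Kummer theory then gives $[F(\pi'^{1/n}):F] = n$. In particular $F_p := F(\pi'^{1/p})$ has degree $p$ over $F$, and $F_p/K$ is Galois because $\pi' \in K$ and $\mu_p \subset F$.

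Now apply Chebotarev to $F_p/K$. The subset $C \subset \Gal(F_p/K)$ of nontrivial elements of the normal cyclic subgroup $\Gal(F_p/F) \cong \Z/p$ is a nonempty union of conjugacy classes. Any prime $v$ unramified in $F_p$ whose Frobenius lies in $C$ splits completely in $F$ (giving A1--A3) and has $\pi' \notin K_v^{\times p}$; because $\mu_n \subset K_v$, the quotient $\F_v^\times/\F_v^{\times n}$ is cyclic of order $n$, and $\pi'$ failing to be a $p$-th power there means $\pi'$ generates this quotient, which by the unit computation of Lemma~\ref{lem:hilb-local-fields} gives A4. Excluding the finitely many primes in $S \cup \{v'\}$ still leaves infinitely many such $v$; varying $v'$ over its infinite family yields infinitely many valid pairs. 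The one nontrivial step is establishing $[F(\pi'^{1/n}):F] = n$, which is forced by the valuation observation $v_\mathfrak{P}(\pi') = 1$; everything else is Chebotarev bookkeeping.
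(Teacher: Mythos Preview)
Your proof is correct. The overall architecture matches the paper's---encode A1--A3 as complete splitting in a fixed abelian extension of $K$, then handle A4 by a second Chebotarev application---but you organize the second step differently. The paper picks $v$ first and then uses the ray class field of $L$ (here $K$) with modulus $v$ to force $\pi'\equiv\alpha\pmod v$ for a prescribed $\alpha$ of exact order $n$; you instead pick $v'$ first and then use the Kummer extension $F(\pi'^{1/p})/F$ to force $\pi'\notin K_v^{\times p}$, invoking that in the cyclic $p$-group $\F_v^\times/\F_v^{\times n}$ a non-$p$th-power is a generator. Your route is arguably more transparent for A4 itself, and it avoids introducing the auxiliary element $\alpha$; on the other hand, the paper's ray-class-field formulation generalizes directly to \S\ref{sec:pairs-general}, where condition B4 demands not just that $\pi'$ have order $n$ at $v$ but also that each nontrivial conjugate $\sigma\pi'$ be an $n$th power there---a congruence condition most naturally imposed via a modulus $\prod_\sigma \sigma v$. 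Two small remarks: your appeal to Lemma~\ref{lem:hilb-local-fields} at the end is really an appeal to Hensel's lemma (units in $K_v^\times/K_v^{\times n}$ are detected in $\F_v^\times/\F_v^{\times n}$), not to the Hilbert-symbol statement itself; and your argument tacitly uses the standing assumption $n=p^r$ made at the start of \S\ref{sec:case-rati-tors}, which is what makes ``not a $p$th power'' equivalent to ``order exactly $n$.''
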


\begin{proof}
    Condition A1 is equivalent to $v$ and $v'$ splitting completely in the Hilbert class field of $K$.

Condition A2 is the same as requiring $v$ to split completely in $K([n]^{-1}E(K))$; that is, the field obtained by adjoining to $K$ all $x\in E(\Kb)$ such that $[n]x\in E(K)$. By~\cite[p.194]{silvermanAEC}, $K([n]^{-1}E(K))$ is a finite abelian extension which is unramified outside $S$.

Let $\sm$ be the modulus given by the product of $n^2$ and all primes where $E$ has bad reduction. Let $K_\sm$ be the ray class field for $K$ with modulus $\sm$. Then condition A3 holds if $v$ and $v'$ split completely in $K_\sm$; for in that case, the Frobenius for $v$ (say) is trivial and, by class field theory, $v$ has a generator $\pi$ which is congruent to $1\pmod{\sm}$. Then Hensel's Lemma implies A3.

We now choose $v$ to be any prime which splits completely in the compositum of the three fields named above. Next we tackle A4.

By abuse of notation, let $v$ be a valuation corresponding to the prime $v$. Let $\alpha\in K$ be any element such that $v(\alpha) = 0$ and whose image in $K_v^\times/K_v^{\times n}$ has order $n$; since $\pi \cong 1 \pmod{\sm}$, and $n \mid \sm$, there exists such $\alpha$ in $K_v$. But $K$ is dense in $K_v$, so we may find such $\alpha$ in $K$. Let $F'$ be the ray class field with modulus $v$. By class field theory, the Galois group $\Gal(F'/K)$ is isomorphic to the class group with modulus $v$. In particular, if $v'$ and $(\alpha)$ lie in the same class in this class group, then $v'$ has a generator $\pi'$ which is congruent to $\alpha \pmod{v}$, and hence satisfies A4.

Let $F$ be the compositum of $K_\sm$ and $K([n]^{-1}E(K))$. We see that $F'$ is unramified outside $v$, while $F$ is unramified at $v$. Hence $F \cap F'$ lies in the Hilbert class field of $K$. To satisfy A1--A3, we wish $v'$ to split completely in $F$, while to satisfy A4, we wish $v'$ to lie in the class of $(\alpha)$ in the appropriate ray class group. These conditions are compatible since they both imply that $v'$ splits completely in the Hilbert class field of $K$. Thus we may apply the Cebotarev density theorem to find infinitely many such $v'$ and $\pi'$.
\end{proof}

\subsection{Construction of curve}
\label{sec:ratl-torsion-cocycles}

As mentioned earlier, a choice of ordered basis $(S,T)$ for $E[n]$ with $e(S,T) = \zeta$ yields an isomorphism
\[
\kappa:\Hp^1(K, E[n]) \to \kummer{K}{n}.
\]
Choose $\xi\in \Hp^1(K, E[n])$ such that $\kappa(\xi) = (\pi, \pi'^{n/\ell})$. Let $X$ be the corresponding principal homogeneous space for $E$.

\begin{proposition} \label{prop:end-ratl-torsion}
    The curve $X$ has period $n$ and index $n\ell$.
\end{proposition}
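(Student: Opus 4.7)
The plan is to compute $\Ob_n(\xi)$ explicitly as a Hilbert symbol, use Proposition~\ref{prop:ob-index} to bound the index from above, establish the period via a short local argument at $v$, and close the gap for the index by combining the converse part of Proposition~\ref{prop:ob-index} at level $I$ with a Tate-pairing compatibility across levels. Condition A2 is the decisive input for both the period and the sharp lower bound on the index.

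By Proposition~\ref{prop:ob-hilb} (applicable since either $n$ is odd or $E[2n]\subset E(K)$), $\Ob_n(\xi) = \langle \pi, \pi'^{n/\ell}\rangle$. I would compute its local components place by place: at $w\in S$ it vanishes by A3; outside $S\cup\{v,v'\}$ it vanishes by Lemma~\ref{lem:hilb-local-fields}(1); and at $v$ it has order exactly $\ell$ by Lemma~\ref{lem:hilb-local-fields}(2) together with A4. Proposition~\ref{prop:product-formula} then forces the $v'$-component to have matching order $\ell$, so $\Ob_n(\xi)$ has global order $\ell$; Proposition~\ref{prop:ob-index} gives $I(X)\mid n\ell$.

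The period divides $n$. If $m\bar\xi=0$, then $m\xi=\delta_n(e)$ for some $e\in E(K)$. Localizing at $v$, A2 gives $e\in nE(K_v)$, so $\delta_{n,v}(e)=0$ and $m\xi_v=0$. Under $\kappa$ the first coordinate of $m\xi_v$ is $\pi^m$, and since $\pi$ is a $v$-uniformizer this forces $n\mid m$. Hence $m=n$.

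The hard part is the matching lower bound $I(X)\geq n\ell$, which is the ``trick'' promised after Proposition~\ref{prop:ob-index}. Suppose for contradiction $I(X) = ns$ with $s\mid \ell$ and $s<\ell$. Proposition~\ref{prop:ob-index} at level $I = ns$ supplies $\xi_I$ representing $X$ with $\Ob_I(\xi_I)=0$. The pushforward $j_*\xi$ along $j\colon E[n]\hookrightarrow E[I]$ also represents $X$, and Proposition~\ref{prop:ob-different-levels}(1) gives $\Ob_I(j_*\xi) = s\Ob_n(\xi)$. Since $\xi_I$ and $j_*\xi$ differ by $\delta_I(e)$ for some $e\in E(K)$, Proposition~\ref{prop:ob-tate} at level $I$ yields $s\Ob_n(\xi) = T_I(e,X)$. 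Localize at $v$: the left side has order $\ell/s > 1$, so is nonzero. For the right side, A2 gives $e = nf$ with $f\in E(K_v)$; combining Propositions~\ref{prop:ob-different-levels}(1) and~\ref{prop:ob-tate} at levels $n$ and $I$ yields the local compatibility $T_I(sy,X_v) = s\,T_n(y,X_v)$ for $y\in E(K_v)$, which applied to $y=(n/s)f$ gives $T_I(e,X)_v = n\,T_n(f,X_v) = 0$ since $T_n$ is $n$-torsion. This contradicts the nonvanishing of the left side, forcing $I(X) = n\ell$.
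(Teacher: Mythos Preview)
Your proposal is correct and follows essentially the same three-step approach as the paper: compute $\Ob_n(\xi)$ place by place to get index $\mid n\ell$, determine the period locally at $v$ via A2, and obtain the lower bound on the index by passing to level $n\ell'$ and showing $\Ob_{n\ell'}$ cannot vanish on any lift of $X$.

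The only notable difference is in the final step. The paper dispatches the vanishing of the Tate term in one line, asserting that $x\in nE(K_v)$ forces $T(x,X)_v=0$; this is justified because the Tate pairing $E(K_v)\times \Hp^1(K_v,E)\to \Br K_v$ is level-independent and $X_v$ has period dividing $n$, so the pairing with $X$ kills $nE(K_v)$ regardless of the level at which one computes. You instead derive the vanishing from the propositions at hand by proving the compatibility $T_I(sy,X)=s\,T_n(y,X)$ via $j_*\delta_n(y)=\delta_I(sy)$ together with Propositions~\ref{prop:ob-tate} and~\ref{prop:ob-different-levels}(1), and then writing $e=sy$ with $y=(n/s)f$ to conclude $T_I(e,X)_v=n\,T_n(f,X_v)=0$. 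Your route is more self-contained relative to what the paper actually states, while the paper's one-liner appeals to the standard fact that the Tate pairing does not depend on the level. Both arguments are valid and the overall strategy is the same.
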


\begin{proof}
    For any positive integer $m$, we have $\kappa(m\xi) = (\pi^m, \pi'^{mn/\ell})$. Let $v$ be the place of $K$ lying over $(\pi)$, and suppose $m < n$. If the curve associated to $m\xi$ is a trivial principal homogeneous space, then there exists $x\in E(K)$ such that $\kappa (\delta x + m\xi) = (1,1)$, where we recall that $\delta$ is the composition
\[
E(K) \to E(K)/nE(K) \to \Hp^1(K, E[n]).
\]
But by condition A2, $x$ lies in $nE(K_v)$. Hence $(\delta x)_v = 0$. Therefore for any choice of $x$, $\kappa (\delta x + m\xi)_v = (\pi^m,\, \cdot \,)$, and so $m\xi$ yields a trivial principal homogeneous space if and only if $n \mid m$. Therefore the period of $X$ is $n$.

By Proposition~\ref{prop:ob-hilb}, $\Ob(\xi) = \langle \pi, \pi'^{n/\ell}\rangle$. We  compute the Hilbert symbol locally. For places $w$ satisfying $w(n) > 0$, condition A3 shows that 
\[\langle \pi,\pi'^{n/\ell}\rangle_w = 0.\]
Let $v'$ be the place corresponding to $\pi'$. For $w \neq v, v'$ and $w(n) = 0$, $\pi$ and $\pi'$ are both units in $K_w$, and so by Lemma~\ref{lem:hilb-local-fields} the local Hilbert symbol is zero. By Proposition~\ref{prop:product-formula}, the order of the Hilbert symbol at $v$ equals that at $v'$, so we need only consider $v$. Combining condition~A4 with Lemma~\ref{lem:hilb-local-fields}, we see that $\langle \pi, \pi'^{n/\ell}\rangle$ has order $\ell$. Hence $\Ob(\xi)$ has order $\ell$, and the index of $X$ divides $n\ell$.

Let $\ell'$ strictly divide $\ell$, and let $j_*$ be the canonical map
\[
\Hp^1(K, E[n]) \to \Hp^1(K, E[n\ell']).
\]
If $X$ had index $n\ell'$, then by Lemma~\ref{prop:ob-index} there would exist $x$ in $E(K)$ such that 
\[\Ob_{n\ell'}(\delta_{n\ell'} x + j_*(\xi)) = 0.
\]
According to Proposition~\ref{prop:ob-tate}, the above equals
\[
\Ob_{n\ell'} (j_*(\xi)) + T(x, X).
\]
By condition~A2, $x\in nE(K_v)$ and hence $T(x,X)_v = 0$. But by Proposition~\ref{prop:ob-different-levels}, $\Ob_{n\ell'} j_*(\xi) = \ell' \Ob_n \xi \neq 0$ if $\ell' <\ell$. Therefore the index of $X$ is precisely $n\ell$.
\end{proof}

\section{Proof of Main Theorem, odd $n$}
\label{sec:fields-cont-roots}

For now, we assume $n$ is odd. We suppose either: a) $\mu_n \subset K$, or b) $E$ contains a Galois-stable subgroup of order $n$. Over $L:=K(E[n])$, we may use the arguments of the previous section to construct a cohomology class $\xi\in\Hp^1(L, E[n])$ with the desired properties. Let $\cores$ be the corestriction map
\[
\cores:\Hp^1(L, E[n]) \to \Hp^1(K, E[n]).
\]
We will show that $\cores \xi$ represents a curve over $K$ with period $n$ and index $n\ell$. In order to compute the index, we will need to base extend back to $L$; in other words, we'll compute $\res \circ \cores \xi$, where $\res$ is the restriction map
\[
\res:\Hp^1(K, E[n]) \to \Hp^1(L, E[n]).
\]

\subsection{Pairs of primes}
\label{sec:pairs-general}

As in the previous section, we wish to choose principal primes of $L$, $(\pi)$ and $(\pi')$, such that analogues to conditions A1--A4 hold, along with a new condition. For any place $w$ of $L$, let $w_K$ denote the place of $K$ lying below $w$. Let $S$ be the set of primes $w$ of $L$ such that $E$ has bad reduction at $w_K$, $w_K$ is archimedean, or $w_K(n)>0$.

We now state the relevant conditions.
\begin{itemize}
    \item[B1.] The primes $v = (\pi)$ and $v' = (\pi')$ are principal, with totally positive generators $\pi$ and $\pi'$. 
    \item[B2.] Under the usual embedding, $E(K)$ lies in $nE(K_{v_K})$.
    \item[B3.] The generators $\pi$ and $\pi'$ lie in $L_w^{\times n}$ for all $w \in S$.
    \item[B4.] The order of the image of $\pi'$ in $L_{v}^{\times}/L_{v}^{\times n}$ is $n$. Additionally, $\sigma \pi'$ lies in $L_{v}^{\times n}$ for all nontrivial $\sigma \in \Gal(L/K)$.
    \item[B5.] The primes $v_K, v_K'$ split completely in $L$.
\end{itemize}

\begin{lemma} \label{lem:cyclotomic-pairs-primes}
  There are infinitely many pairs of primes $(\pi)$, $(\pi')$ satisfying conditions B1--B5.
\end{lemma}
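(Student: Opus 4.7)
The plan is to adapt the Cebotarev argument from Lemma~\ref{lem:rational-torsion-pairs-primes}, but to take density over primes of $K$ rather than $L$, so that condition B5 drops out for free. The genuinely new feature compared with A1--A4 is the second clause of B4: it demands that $\pi'$ be an $n$-th power in $L_w$ at every prime $w$ of $L$ above $v_K$ other than $v$. Since $v_K$ splits completely in $L$, these conjugate primes are exactly the $\sigma^{-1}v$ for nontrivial $\sigma \in \Gal(L/K)$, and the isomorphism of completions $\sigma\colon L_{\sigma^{-1}v} \to L_v$ translates $\sigma\pi' \in L_v^{\times n}$ into $\pi' \in L_{\sigma^{-1}v}^{\times n}$. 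By Hensel (using $w(n)=0$), it suffices to arrange $\pi' \equiv 1 \pmod{w}$ at each such $w$.

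To choose $v$, let $\mathfrak{m}$ be the $L$-modulus consisting of $n^2$, the primes of bad reduction of $E$ over $L$, and all archimedean places. Because the bad-reduction locus of $E$ pulled back to $L$ is $\Gal(L/K)$-stable, so is $\mathfrak{m}$, and hence the ray class field $L_\mathfrak{m}$ is Galois over $K$. By Cebotarev, choose $v_K$ to split completely in the compositum $L_\mathfrak{m} \cdot K([n]^{-1}E(K))$, and let $v$ be any prime of $L$ above it. Complete splitting in $L$ gives B5 for $v$; splitting in $K([n]^{-1}E(K))$ gives B2 as in Lemma~\ref{lem:rational-torsion-pairs-primes}; and splitting in $L_\mathfrak{m}$ produces a totally positive generator $\pi$ of $v$ with $\pi \equiv 1 \pmod{\mathfrak{m}}$, which delivers B1 and B3 by Hensel.

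To choose $v'$, use weak approximation to pick $\alpha_0 \in L^\times$ that is totally positive, $\equiv 1 \pmod{\mathfrak{m}}$, $\equiv 1 \pmod{w}$ for each $w \mid v_K$ with $w \ne v$, and $\equiv \alpha \pmod{v}$, where $\alpha \in L_v^\times$ is a unit whose class in $L_v^\times/L_v^{\times n}$ has order $n$. Such $\alpha$ exists because $\mu_n \subset L_v$ forces $\mathcal{O}_{L_v}^\times/(\mathcal{O}_{L_v}^\times)^n \isom \Z/n\Z$, and $L$ is dense in $L_v$. Enlarge the modulus to $\mathfrak{M} = \mathfrak{m} \cdot v_K\mathcal{O}_L$; this remains $\Gal(L/K)$-stable, so $L_\mathfrak{M}$ is Galois over $K$. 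Let $\sigma_0 \in \Gal(L_\mathfrak{M}/L)$ be the Artin image of the class of $(\alpha_0)$. By Cebotarev applied to $L_\mathfrak{M}/K$, there are infinitely many primes $v_K' \ne v_K$ whose Frobenius lies in the $\Gal(L_\mathfrak{M}/K)$-conjugacy class of $\sigma_0$; any such Frobenius lies inside $\Gal(L_\mathfrak{M}/L)$, so $v_K'$ splits completely in $L$, giving B5 for $v'$. Selecting the prime $v'$ above $v_K'$ whose Frobenius is exactly $\sigma_0$ yields $v' = (\pi')$ with $\pi' \equiv \alpha_0 \pmod{\mathfrak{M}}$, from which B1, B3, and both clauses of B4 follow.

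The main obstacle lies in the bookkeeping of the previous paragraph: arranging that a single Cebotarev invocation over $K$ simultaneously secures B5, the order-$n$ condition at $v$, and the $n$-th-power conditions at the $[L:K]-1$ conjugate primes above $v_K$. This is what forces the modulus to be $\Gal(L/K)$-stable, which in turn is why I bundle all primes above $v_K$ into the single factor $v_K\mathcal{O}_L$ rather than treating them one at a time; once this is arranged, the remainder of the argument is a routine Cebotarev calculation parallel to Lemma~\ref{lem:rational-torsion-pairs-primes}.
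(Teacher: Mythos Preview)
Your argument is correct and follows essentially the same Cebotarev strategy as the paper: choose $v_K$ splitting completely in a compositum of $L_{\mathfrak m}$ with $K([n]^{-1}E(K))$, then realize the congruence class of a carefully chosen $\alpha$ at the primes above $v_K$ via a second Cebotarev application over $K$. The only cosmetic difference is packaging---the paper keeps the auxiliary ray class field $F' = L_{\prod_\sigma \sigma v}$ separate from the field $F$ governing B1--B3, B5 and checks compatibility via $F\cap F'\subset H_L$, whereas you absorb everything into the single $\Gal(L/K)$-stable modulus $\mathfrak M = \mathfrak m \cdot v_K\mathcal O_L$.
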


\begin{proof}
  Let $\sm$ be the modulus over $L$ given as the product of $n^2$ and all primes $w$ in $S$. Let $L_\sm$ be the ray class field of $L$ with modulus $\sm$. The modulus $\sm$ is rational over $K$, so that $L_\sm$ is Galois over $K$. Let $F$ be the compositum of $L_\sm$, $K([n]^{-1}E(K))$, and the Hilbert class field of $K$. By the Cebotarev density theorem, there exists infinitely many primes $v_K$ of $K$ which split completely in $F$. Setting $v$ equal to any prime of $L$ lying over $v_K$, we use the same reasoning as in Lemma~\ref{lem:rational-torsion-pairs-primes} to see that $v$ satisfies conditions B1--B5.

Choose any unit $\beta$ in $L_v$ which has order $n$ in $L_v^\times/L_v^{\times n}$. By the Chinese Remainder Theorem, there exists $\alpha \in L$ such that
\begin{align} \label{eq:alpha-cong}
  \alpha & \equiv \beta \pmod{v} \notag \\
  \alpha & \equiv 1 \pmod{\sigma v} \quad \forall \sigma\neq 1 \in \Gal(L/K)
\end{align}

Let $F'$ be the ray class field for $L$ with modulus $\prod \sigma v$. The modulus is rational over $K$, so that $F'$ is Galois over $K$. The Galois group $\Gal(F'/L)$ is isomorphic to the class group with modulus $\prod \sigma v$ via the Artin reciprocity map. Let $\gamma_L$ in $\Gal(F'/L)$ map to the class of $(\alpha)$ under this isomorphism. Then under the inclusion $\Gal(F'/L) \hookrightarrow \Gal(F'/K)$, $\gamma_L$ maps to, say, $\gamma$. Let $[\gamma]$ be the conjugacy class of $\gamma$ in $\Gal(F'/K)$. 

One can find $\tau \in \Gal(F'F/K)$ such that $\tau|_{F'}$ lies in $[\gamma]$, and $\tau|_{F}$ is trivial. This follows because $F'/L$ is ramified only at the $\sigma v$, while $F/L$ is unramified at those places; therefore $F\cap F'$ is contained in the Hilbert class field of $L$. But $\gamma$ acts trivially on the Hilbert class field since $(\alpha)$ is principal. Therefore, $\tau$ as prescribed exists.

Now we apply the Cebotarev density theorem again to find $v'_K$ corresponding to the conjugacy class of $\tau$.
Choosing $v'$ to be any place of $L$ lying over $v'_K$, we use the same reasoning as before to conclude that $v'$ satisfies the conditions.
\end{proof}

\subsection{The corestriction map}
\label{sec:cores}

As before, choose a basis $(S,T)$ for $E[n]$ such that $e(S, T) = \zeta$. If we are under the hypothesis that $E$ contains a Galois-stable order $n$ subgroup, choose our basis so that $S$ generates this subgroup. As before, our choice of basis yields an isomorphism
\[
\kappa: \Hp^1(L, E[n]) \to \kummer{L}{n} 
\]
Let $\pi, \pi'$ be as in the previous section, and choose $\xi$ so that $\kappa(\xi) = (\pi,\pi')$. Let $\cores:\Hp^1(L, E[n]) \to \Hp^1(K, E[n])$ be the corestriction map, and $\eta = \cores(\xi)$. We wish to show that the curve corresponding to $\eta$ satisfies the conditions of our theorem. But it is too difficult to compute the obstruction map $\Ob(\eta)$, so we will instead use $\Ob(\res \eta)$, where $\res$ is the restriction map
\[
\Hp^1(K, E[n]) \to \Hp^1(L, E[n]).
\]
Therefore, we need to compute $\res \circ \cores (\xi)$. Note that $\cores \circ \res$ is well-known and equal to $[L:K]$, but the same is not true of $\res\circ\cores$. 

For any $G_K$-module $M$ and nonnegative integer $r$, one has an action of $\Gal(L/K)$ on $\Hp^r(L, M)$. This action is induced by the action on homogeneous $r$-cochains
\[
c^\sigma (\gamma_1, \dots, \gamma_r) = \sigma c(\sigma^{-1}\gamma_1\sigma, \dots, \sigma^{-1}\gamma_r\sigma)
\]
where $\sigma \in \Gal(L/K)$ and $\gamma_i \in G_L$. (Actually, we must lift $\sigma$ to any fixed element of $G_K$ when acting by conjugation on the $\gamma_i$.) Define $\Nm: \Hp^r(L,M) \to \Hp^r(L,M)$ by
\[
\Nm(\theta) = \sum_{\sigma\in\Gal(L/K)} \theta^\sigma.
\]
\begin{lemma}
  For $\theta \in \Hp^r(L, M)$, $\res\circ\cores \theta = \Nm \theta$, where $\res$ and $\cores$ are the obvious restriction and corestriction maps.
\end{lemma}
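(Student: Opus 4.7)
The plan is to establish the identity at the level of inhomogeneous cocycles, using the standard explicit formula for corestriction in terms of a system of coset representatives. I fix lifts $\{\tau_\sigma\}_{\sigma\in\Gal(L/K)} \subset G_K$ of the elements of $\Gal(L/K)$, so that $G_K = \bigsqcup_\sigma \tau_\sigma G_L$. For any $g \in G_K$ and $\sigma \in \Gal(L/K)$ there are unique $\sigma'(g,\sigma) \in \Gal(L/K)$ and $\lambda(g,\sigma) \in G_L$ with $g\tau_\sigma = \tau_{\sigma'(g,\sigma)} \lambda(g,\sigma)$; the usual formula writes $\cores c$, for an $r$-cocycle $c$ of $G_L$, as a sum over $\sigma$ of terms built from the $\lambda$-factors, with $\tau_\sigma$ applied to the resulting value of $c$.

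The essential step is to specialize this formula to $r$-tuples with entries in $G_L$. Because $G_L$ is normal in $G_K$, for any $\gamma \in G_L$ one has the clean identity $\gamma\tau_\sigma = \tau_\sigma(\tau_\sigma^{-1}\gamma\tau_\sigma)$ with $\tau_\sigma^{-1}\gamma\tau_\sigma \in G_L$. Hence the permutation $\sigma \mapsto \sigma'$ is trivial on $G_L$-arguments, and every $\lambda$-factor collapses to an honest conjugate. Substituting into the corestriction formula, one obtains
\[
\res\cores c(\gamma_1,\dots,\gamma_r) = \sum_\sigma \tau_\sigma \cdot c(\tau_\sigma^{-1}\gamma_1\tau_\sigma,\dots,\tau_\sigma^{-1}\gamma_r\tau_\sigma),
\]
which matches term by term the definition of $\sum_\sigma c^\sigma = \Nm c$ recalled just before the lemma, once we take $\tau_\sigma$ as the prescribed lift of $\sigma$.

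The only subtlety --- and the main item to verify with a brief cochain check --- is that the conjugation action $c \mapsto c^\sigma$ descends to cohomology independently of the chosen lift, so that the cocycle identity above is meaningful on $\Hp^r(L,M)$. Replacing $\tau_\sigma$ by $\tau_\sigma\mu$ with $\mu \in G_L$ alters $c^\sigma$ by an explicit coboundary, so the induced map on $\Hp^r(L,M)$ is canonical, and the cocycle-level identity passes to the desired cohomological identity $\res\cores\theta = \Nm\theta$.
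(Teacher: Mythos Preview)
Your proposal is correct and follows a genuinely different route from the paper. The paper's own proof is a one-liner: it invokes the definition of corestriction together with dimension shifting (citing Serre, \emph{Local Fields}, p.~119), which reduces the statement to degree $0$, where corestriction is literally the norm map on invariants and the identity is immediate. You instead work directly at the cocycle level for arbitrary $r$, using the explicit coset-representative formula for corestriction and exploiting normality of $G_L$ in $G_K$ to collapse each summand to a conjugate $c^\sigma$. Your approach is more hands-on and self-contained, while the paper's is shorter and more conceptual once one is willing to invoke the dimension-shifting machinery; both are standard and either would be acceptable here. One small comment: your description of ``the usual formula'' for $\cores c$ is left vague, and for $r>1$ the general formula involves a chain of coset representatives that shift along the tuple; it would strengthen the write-up to state that formula precisely (or cite it) before observing that normality forces the chain to be constant and each $\lambda$-factor to be $\tau_\sigma^{-1}\gamma_i\tau_\sigma$.
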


\begin{proof}
  This follows from the definition of corestriction and dimension shifting; see~\cite[p. 119]{serrelocalfields}.
\end{proof}

Therefore $\cores \eta = \Nm \xi$. Unfortunately, $\kappa$ is not $\Gal(L/K)$-equivariant, so that $\kappa(\Nm \xi) \neq (\Nm \pi,\Nm \pi')$. Instead, we have a \emph{twisted} norm on $\kummer{L}{n}$, which we now describe. Let
\begin{align*}
  \label{eq:1}
  \Gal(L/K) &\to \Gl_2(\Z/n\Z) \\
  \sigma &\mapsto M_\sigma
\end{align*}
be the representation of $\Gal(L/K)$ on $E[n]$ with respect to the basis $(S,T)$. The determinant of $M_\sigma$ is given by the $n$th cyclotomic character. If $\mu_n \subset K$, then the determinant is identically 1, and hence the representation lies in $\Sl_2(\Z/n\Z)$. If $S$ generates a Galois-stable subgroup, then the representation lies in the set of upper-triangular matrices.

\begin{proposition}\label{prop:G-action-cohomology}
  Suppose $\sigma \in \Gal(L/K)$, $\xi \in \Hp^1(L, E[n])$ and $\kappa(\xi) = (a,b)$. Then
\[
\kappa(\xi^\sigma) = \frac{M_\sigma}{\det M_\sigma} (\sigma a,\sigma b)
\]
where for $M\in \Gl_2(\Z/n\Z)$, if 
\[
M = \left[
  \begin{array}{cc}
    i & j \\
    k & l 
  \end{array}\right],
\]
then $M (a,b) = (a^ib^j, a^kb^l)$.
\end{proposition}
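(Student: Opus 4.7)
The plan is to unwind the identification $\kappa$ at the level of cocycles. Since $L = K(E[n])$, the absolute Galois group $G_L$ acts trivially on $E[n]$, so any $\xi \in H^1(L, E[n])$ is represented by a homomorphism $c: G_L \to E[n]$. Using the basis $(S,T)$, write $c(\gamma) = \hat f(\gamma) S + \hat g(\gamma) T$ with $\hat f, \hat g: G_L \to \Z/n\Z$. Under $\kappa$, the pair $(\hat f, \hat g)$ corresponds to $(a,b) \in (L^\times/L^{\times n})^2$ in the Kummer sense: choosing $a^{1/n}, b^{1/n} \in \Kb$, one has $\gamma(a^{1/n}) = \zeta^{\hat f(\gamma)}\, a^{1/n}$ and analogously for $b$.

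Next I would compute $\xi^\sigma$ directly from the definition. Lifting $\sigma$ to $\tilde\sigma \in G_K$ and using $c^\sigma(\gamma) = \tilde\sigma\, c(\tilde\sigma^{-1}\gamma\tilde\sigma)$, expand via $\tilde\sigma S = iS + kT$ and $\tilde\sigma T = jS + lT$ (where $M_\sigma = \left[\begin{smallmatrix} i & j \\ k & l\end{smallmatrix}\right]$) to obtain
\[
c^\sigma(\gamma) = \bigl( i\hat f(\tilde\sigma^{-1}\gamma\tilde\sigma) + j\hat g(\tilde\sigma^{-1}\gamma\tilde\sigma)\bigr) S + \bigl( k\hat f(\tilde\sigma^{-1}\gamma\tilde\sigma) + l\hat g(\tilde\sigma^{-1}\gamma\tilde\sigma)\bigr) T.
\]
So the problem reduces to identifying the Kummer element corresponding to the twisted cocycle $\gamma \mapsto \hat f(\tilde\sigma^{-1}\gamma\tilde\sigma)$, and likewise for $\hat g$.

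This is the heart of the matter, and the step where care is required. Taking $\tilde\sigma(a^{1/n})$ as a chosen $n$th root of $\sigma a$, a short computation yields, for $\gamma \in G_L$,
\[
\gamma\bigl(\tilde\sigma(a^{1/n})\bigr) = \tilde\sigma\bigl(\zeta^{\hat f(\tilde\sigma^{-1}\gamma\tilde\sigma)} a^{1/n}\bigr) = \zeta^{\chi(\sigma)\hat f(\tilde\sigma^{-1}\gamma\tilde\sigma)}\,\tilde\sigma(a^{1/n}),
\]
where $\chi$ is the mod-$n$ cyclotomic character. Thus the Kummer character of $\sigma a$ is $\chi(\sigma) \cdot \hat f(\tilde\sigma^{-1}\gamma\tilde\sigma)$, i.e.\ $\gamma \mapsto \hat f(\tilde\sigma^{-1}\gamma\tilde\sigma)$ corresponds to $(\sigma a)^{\chi(\sigma)^{-1}}$ in $L^\times/L^{\times n}$ (which makes sense because $\chi(\sigma) \in (\Z/n\Z)^\times$). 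Compatibility of the Weil pairing with Galois gives $e(\tilde\sigma S,\tilde\sigma T) = \zeta^{\det M_\sigma}$, so $\det M_\sigma \equiv \chi(\sigma) \pmod n$, and the twist is exactly by $(\det M_\sigma)^{-1}$.

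Combining the two observations, the $S$-coordinate of $c^\sigma$ corresponds under Kummer to $(\sigma a)^{i/\det M_\sigma}(\sigma b)^{j/\det M_\sigma}$, and the $T$-coordinate to $(\sigma a)^{k/\det M_\sigma}(\sigma b)^{l/\det M_\sigma}$, which is precisely the statement $\kappa(\xi^\sigma) = \frac{M_\sigma}{\det M_\sigma}(\sigma a, \sigma b)$. The only genuine difficulty is keeping the bookkeeping straight between the $G_L$-action (which sees $E[n]$ as the trivial module $(\Z/n\Z)^2$) and the $G_K$-twist conjugation that mixes in the cyclotomic character; everything else is formal manipulation of cocycles and Kummer theory.
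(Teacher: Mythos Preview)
Your argument is correct and captures exactly the same mathematical content as the paper's proof: the factor $(\det M_\sigma)^{-1}$ arises because the Kummer isomorphism is sensitive to the cyclotomic action on $\mu_n$, and $\det M_\sigma = \chi(\sigma)$ via the Weil pairing. The difference is purely in packaging. You work directly with an explicit cocycle $c:G_L\to E[n]$, expand $c^\sigma$ in the basis $(S,T)$, and then chase the Kummer correspondence by computing $\gamma(\tilde\sigma(a^{1/n}))$. The paper instead factors $\kappa^{-1}$ as a composite $\phi\circ i_*\circ\psi$ through an auxiliary module $(\mu_n\times\mu_n)_\rho$ carrying the $E[n]$-twisted Galois structure; here $\psi$ and $\phi$ are genuinely $\Gal(L/K)$-equivariant and the entire failure of equivariance is isolated in the identity-on-groups map $i_*$, where one sees $\sigma$ acting by $M_\sigma$ on one side and by $\det M_\sigma$ on the other. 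Your hands-on computation is arguably more transparent for a reader who just wants the formula, while the paper's diagrammatic version makes clearer \emph{where} the non-equivariance of $\kappa$ lives and would scale more cleanly to other coefficient modules.
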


\begin{proof}
  Let $\rho: E[n] \to \mu_n\times\mu_n$ be the isomorphism given by the basis $(S,T)$. Write $(\mu_n\times\mu_n)_\rho$ for the $\Gal(L/K)$-module with underlying group $\mu_n\times\mu_n$, but with module-structure making $\rho$ an isomorphism of Galois modules; that is
  \begin{align*}
\sigma(\zeta_1,\zeta_2)_\rho &= \rho\sigma\rho^{-1}(\zeta_1,\zeta_2)\\
&= M_\sigma(\zeta_1, \zeta_2).
  \end{align*}
Consider the diagram
\begin{equation} \label{mu_n rho}
\xymatrix{\kummer{L}{n} \ar[r]^-\psi & \Hp^1(L,\mu_n\times \mu_n) \ar[d]^{i_*} & \\
& \Hp^1(L,(\mu_n\times\mu_n)_\rho) \ar[r]_-{\phi} & \Hp^1(L,E[n])}.
\end{equation}
The isomorphism $\psi$ comes from the usual Kummer isomorphism. The map $i_*$ is induced by the canonical isomorphism of the underlying groups $i:\mu_n \times \mu_n \to (\mu_n\times\mu_n)_\rho$. The map $\phi$ is induced by the map sending $(\zeta, 1)$ to $S$ and $(1,\zeta)$ to $T$ (compare to $\kappa^{-1}$). Then the horizontal arrows are both $\Gal(L/K)$-equivariant, and the composition $\phi i_*\psi$ is equal to $\kappa^{-1}$. 

Let $\gamma \in \Gal(\bar{L}/L)$ and lift $\sigma$ arbitrarily to an element, also written $\sigma$, of $\Gal(\Kb/K)$. We have
\begin{align}
  [i_*\psi(a,b)]^\sigma(\gamma) &= \sigma i(\psi(a,b)(\sigma^{-1}\gamma\sigma)) \\
  &= \sigma i(\sigma^{-1}\sigma \psi(a,b)(\sigma^{-1}\gamma\sigma)) \\
  &= \sigma i(\sigma^{-1}\psi(a,b)^\sigma(\gamma)) \label{eq:2} \\
  &= \sigma i(\sigma^{-1}[\psi(\sigma a, \sigma b)(\gamma)]) \label{eq:3}\\
  &= \frac{M_\sigma}{\det M_\sigma} i[\psi(\sigma a, \sigma b)(\gamma)] \label{eq:4}\\
  &= i_*\psi\left(\frac{M_\sigma}{\det M_\sigma} (\sigma a, \sigma b)\right) (\gamma).
\end{align}
The equality~(\ref{eq:2}) follows by the definition of the $\Gal(L/K)$-action on $\Hp^1(L, \mu_n\times\mu_n)$. Since $\psi$ is a $\Gal(L/K)$-morphism, we obtain~(\ref{eq:3}). Finally, (\ref{eq:4}) follows from the action of $\sigma$ on $E[n]$ by the matrix $M_\sigma$, and the action of $\sigma^{-1}$ on $\mu_n\times \mu_n$ by $(\det M_\sigma)^{-1}$.

Now apply $\phi$ to both sides of the equation to obtain
\[
[\kappa^{-1}(a,b)]^\sigma = \kappa^{-1}\left(\frac{M_\sigma}{\det M_\sigma} (\sigma a, \sigma b)\right)
\]
from which the lemma follows.
\end{proof}

\begin{corollary}\label{cor:explicit-norm}
  Let $\kappa(\xi) = (a,b)$. Then $\kappa(\Nm \xi)$ equals
    \[\left(\prod (\det M_\sigma)^{-1}\sigma a^{i(\sigma)} \sigma b^{j(\sigma)},\, \prod (\det M_\sigma)^{-1} \sigma a^{k(\sigma)} \sigma b^{\ell(\sigma)}\right)\]
where the product is taken over $\sigma \in \Gal(L/K)$.
\end{corollary}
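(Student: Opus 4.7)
The plan is that this corollary is essentially a direct computation using Proposition~\ref{prop:G-action-cohomology} together with the definition of $\Nm$. No new conceptual input is required; the only work is bookkeeping to move everything through the isomorphism $\kappa$ and to unpack the matrix action in coordinates.

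First I would use that $\kappa: \Hp^1(L, E[n]) \to \kummer{L}{n}$ is an isomorphism of abelian groups (not of $\Gal(L/K)$-modules, which is precisely the reason we need Proposition~\ref{prop:G-action-cohomology}). Since $\Nm\xi = \sum_{\sigma\in\Gal(L/K)} \xi^\sigma$ in the additive notation for cohomology, and the group operation on $\kummer{L}{n}$ is written multiplicatively, applying $\kappa$ converts the sum into a product:
\[
\kappa(\Nm \xi) \;=\; \prod_{\sigma\in\Gal(L/K)} \kappa(\xi^\sigma).
\]

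Next I would substitute the formula from Proposition~\ref{prop:G-action-cohomology}, namely $\kappa(\xi^\sigma) = (M_\sigma/\det M_\sigma)\,(\sigma a,\sigma b)$, into each factor. Writing
\[
M_\sigma = \begin{pmatrix} i(\sigma) & j(\sigma) \\ k(\sigma) & \ell(\sigma) \end{pmatrix},
\]
the stated convention $M(a,b) = (a^i b^j, a^k b^\ell)$ (with exponents taken in $\Z/n\Z$) yields
\[
\kappa(\xi^\sigma) \;=\; \Bigl((\sigma a)^{i(\sigma)}(\sigma b)^{j(\sigma)},\; (\sigma a)^{k(\sigma)}(\sigma b)^{\ell(\sigma)}\Bigr)^{(\det M_\sigma)^{-1}},
\]
which is the notation $\bigl((\det M_\sigma)^{-1}\sigma a^{i(\sigma)}\sigma b^{j(\sigma)},\,(\det M_\sigma)^{-1}\sigma a^{k(\sigma)}\sigma b^{\ell(\sigma)}\bigr)$ of the corollary. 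Multiplying these expressions coordinatewise over all $\sigma \in \Gal(L/K)$ produces the displayed formula.

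The only place one has to be careful is purely notational: the exponents $i(\sigma),j(\sigma),k(\sigma),\ell(\sigma)$ and the factor $(\det M_\sigma)^{-1}$ live in $\Z/n\Z$, which is exactly the setting in which Kummer classes in $\kummer{L}{n}$ are well-defined. There is no obstacle of substance here; the proof is a single line after Proposition~\ref{prop:G-action-cohomology} is established.
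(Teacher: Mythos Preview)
Your proposal is correct and matches the paper's treatment: the paper states this as an immediate corollary of Proposition~\ref{prop:G-action-cohomology} with no separate proof, and your write-up is exactly the one-line bookkeeping (apply the group isomorphism $\kappa$ to $\Nm\xi=\sum_\sigma\xi^\sigma$, substitute the formula for $\kappa(\xi^\sigma)$, and multiply coordinatewise) that the paper leaves implicit.
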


\subsection{Computation of the obstruction map}
\label{sec:ob-cores}

Let $\xi \in \Hp^1(L, E[n])$ satisfy $\kappa(\xi) = (\pi,\pi'^{n/\ell})$, where $\pi$ and $\pi'$ were chosen as in Lemma~\ref{lem:cyclotomic-pairs-primes}. Let $\eta = \cores \xi$. In order to compute $\Ob (\eta)$, we instead compute $\res \Ob (\eta) = \Ob (\res \eta)$, or $\Ob (\Nm \xi)$. But the $n$-torsion $E[n]$ is rational over $L$, so by Proposition~\ref{prop:ob-hilb} we may use the Hilbert symbol to compute $\Ob (\Nm \xi)$. Let
\begin{align*}
  c &= \prod (\det M_\sigma)^{-1} \sigma \pi^{i(\sigma)} \\
  d & = \prod (\det M_\sigma)^{-1} \sigma \pi^{k(\sigma)} \\
  c' &= \prod (\det M_\sigma)^{-1} \sigma \pi'^{\frac{n}{\ell}j(\sigma)} \\
  d' & = \prod (\det M_\sigma)^{-1} \sigma \pi'^{\frac{n}{\ell}l(\sigma)}.
\end{align*}
Thus, $\Nm \kappa^{-1}(\pi, 1) = \kappa^{-1}(c,d)$, $\Nm \kappa^{-1}(1,\pi'^{n/\ell}) = \kappa^{-1}(c',d')$, and $\kappa(\Nm \xi) = (cc',dd')$. We wish to compute the Hilbert symbol $\langle cc', dd'\rangle$.

\begin{lemma}\label{lem:cyc-local-triviality}
  Let $w$ be any place of $L$ satisfying $w(\pi\pi') = 0$. Then the local Hilbert symbol $\langle cc',dd'\rangle_w$ is trivial
\end{lemma}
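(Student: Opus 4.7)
The strategy is a case analysis on the place $w$ of $L$. Let $w_K$ denote the restriction of $w$ to $K$; I will handle four cases, distinguished by whether $w\in S$ and whether $w$ lies above $v_K$ or $v_K'$.

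First, suppose $w \in S$. The set $S$ is defined by conditions on $w_K$ that are $\Gal(L/K)$-invariant (archimedean, bad reduction, or dividing $n$), so $S$ is Galois-stable; hence $\sigma^{-1}w \in S$ for every $\sigma$. Condition B3 applied at $\sigma^{-1}w$ gives $\pi, \pi' \in L_{\sigma^{-1}w}^{\times n}$, which after applying the canonical isomorphism $\sigma: L_{\sigma^{-1}w} \to L_w$ yields $\sigma\pi, \sigma\pi' \in L_w^{\times n}$ for every $\sigma$. Thus each of $c, d, c', d'$ is an $n$-th power in $L_w^\times$, and $\langle cc', dd'\rangle_w = 0$. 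Next, suppose $w \notin S$ and $w$ lies above neither $v_K$ nor $v_K'$. Then the ideal $(\sigma\pi) = \sigma v$ does not contain $w$ for any $\sigma$, and similarly for $\sigma\pi'$, so every factor appearing in $c, d, c', d'$ is a unit at $w$. Since $w(n)=0$, Lemma~\ref{lem:hilb-local-fields}(1) gives $\langle cc', dd'\rangle_w = 0$.

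The main case is when $w \notin S$ lies above $v_K$ (the case above $v_K'$ being symmetric). By B5, $v_K$ splits completely in $L$, so $w = \sigma_0 v$ for some $\sigma_0 \in \Gal(L/K)$, and $\sigma_0 \neq 1$ since $w \neq v$. The canonical isomorphism $\sigma_0: L_v \to L_w$ transports condition B4 as follows: for each $\sigma \neq \sigma_0$, we have $\sigma_0^{-1}\sigma \neq 1$, hence $\sigma_0^{-1}\sigma\pi' \in L_v^{\times n}$ by B4, and so $\sigma\pi' \in L_w^{\times n}$. Meanwhile $\sigma_0\pi'$ corresponds under $\sigma_0$ to $\pi'$ at $v$, and therefore has order exactly $n$ in $L_w^\times/L_w^{\times n}$. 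Also, $\sigma_0\pi$ is a uniformizer at $w$ while $\sigma\pi$ for $\sigma \neq \sigma_0$ is a unit there. Modulo $L_w^{\times n}$, the expressions $c'$ and $d'$ therefore reduce to pure powers of $\sigma_0\pi'$, while $c$ and $d$ reduce to products involving $\sigma_0\pi$ and units $\sigma\pi$ with $\sigma \neq \sigma_0$.

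The plan to conclude is to expand $\langle cc', dd'\rangle_w$ bilinearly: pairs of units vanish by Lemma~\ref{lem:hilb-local-fields}(1), and diagonal terms $\langle x, x\rangle_w$ vanish for odd $n$ by skew-symmetry. What survives is a linear combination of symbols of the form $\langle \sigma_0\pi, u\rangle_w$ (for various units $u$) and $\langle \sigma_0\pi, \sigma_0\pi'\rangle_w$, with coefficients built from the entries $i(\sigma_0), j(\sigma_0), k(\sigma_0), l(\sigma_0)$ together with the factor $(\det M_{\sigma_0})^{-1}$ from Corollary~\ref{cor:explicit-norm}. The determinantal identity $i(\sigma_0)l(\sigma_0) - j(\sigma_0)k(\sigma_0) = \det M_{\sigma_0}$ should cause the coefficients on the nonvanishing Hilbert symbols to combine to a multiple of $n$, yielding triviality. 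Executing this last combinatorial step is the main obstacle: one must carefully track the matrix-entry exponents through the twisted norm. If the bilinear expansion leaves a residual symbol, one would invoke the twisted Galois invariance of $(cc', dd')$ (Proposition~\ref{prop:G-action-cohomology}), which constrains the local data at the different $w \mid v_K$ to be compatible and so kills any residue. The symmetric case of $w$ lying above $v_K'$ is handled identically, with the roles of $\pi,\pi'$ and the rows/columns of $M_\sigma$ interchanged, and the degenerate situation $v_K = v_K'$ (which Lemma~\ref{lem:cyclotomic-pairs-primes} is designed to avoid) treated separately if necessary.
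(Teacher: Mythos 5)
Your first two cases are correct and together already reproduce the whole of the paper's argument: for $w\in S$, condition B3 together with the Galois-stability of $S$ makes every conjugate of $\pi$ and $\pi'$ an $n$th power in $L_w^\times$, and for $w\notin S$ not lying over $v_K$ or $v_K'$ every factor of $c,d,c',d'$ is a unit, so Lemma~\ref{lem:hilb-local-fields} applies. (The archimedean places, which you fold into the first case, are disposed of in the paper by noting that $L\supset\mu_n$ with $n$ an odd prime power, so $L$ is totally complex.)

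Your ``main case'' $w=\sigma_0 v$ with $\sigma_0\neq 1$ is a genuine gap, and it is not one you could have closed: the assertion is actually \emph{false} at such places when $\ell>1$ and $L\neq K$. Carrying out the bilinear expansion you describe, the two surviving cross terms contribute the $\sigma_0\pi'$-part with total exponent $\tfrac{n}{\ell}\bigl(i(\sigma_0)l(\sigma_0)-j(\sigma_0)k(\sigma_0)\bigr)=\tfrac{n}{\ell}\det M_{\sigma_0}$; the determinant identity therefore does not yield a multiple of $n$ but rather the term $\tfrac{n}{\ell}\det M_{\sigma_0}\,\langle \sigma_0\pi,\sigma_0\pi'\rangle_w$, which by condition B4 transported by $\sigma_0$ and Lemma~\ref{lem:hilb-local-fields} has order exactly $\ell$ when $\det M_{\sigma_0}=1$ (and there are additional uncontrolled terms $\langle\sigma_0\pi,\sigma\pi\rangle_w$). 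Your fallback fails for the same reason: $\langle cc',dd'\rangle=\Ob(\Nm\xi)=\Ob(\res\eta)=\res(\Ob\eta)$, and since $v_K$ splits completely in $L$, the local invariant of this restricted class at \emph{every} place over $v_K$ equals its invariant at $v_K$; Galois invariance thus forces the invariant at $\sigma_0 v$ to equal that at $v$, which Lemma~\ref{lem:cyc-hilb-v} shows has order $\ell$. For what it is worth, the paper's own proof has the same blind spot --- it asserts that $cc'$ and $dd'$ are units at every nonarchimedean $w\notin S$ with $w(\pi\pi')=0$, which fails at the conjugates $\sigma v$ and $\sigma v'$ --- but the lemma is only ever needed at places not lying over $v_K$ or $v_K'$, since the behaviour over those two places is controlled in Proposition~\ref{prop:ob-eta} by the corestriction direct-sum formula and Lemma~\ref{lem:cyc-hilb-v}. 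Read with the hypothesis ``$w$ does not lie over $v_K$ or $v_K'$,'' your first two cases constitute a complete proof; the third case should be excluded from the statement rather than argued.
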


\begin{proof}
  If $w$ is non-archimedean and $w(n)=0$, then $cc'$ and $dd'$ are both units in $L_w$. Therefore by Lemma~\ref{lem:hilb-local-fields} the Hilbert symbol is trivial.

If $w(n)\neq 0$, then $w$ lies in $S$. According to condition B3, $\pi$, $\pi'$ and their conjugates lie in $L_w^{\times n}$. Again, the Hilbert symbol is trivial.

Lastly, since $n$ is an odd prime power and $\mu_n \subset L$, $L$ has no real embeddings. Therefore the Hilbert symbol at all the archimedean places is automatically trivial.
\end{proof}

\begin{lemma} \label{lem:cyc-hilb-v}
  Let $v$ be the place of $L$ corresponding to $\pi$. 
Then $\langle cc',dd'\rangle_v$ has exact order $\ell$. 
\end{lemma}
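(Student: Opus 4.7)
My plan is to expand $\langle cc',dd'\rangle_v$ by bilinearity of the Hilbert symbol and reduce each factor modulo $L_v^{\times n}$, using conditions B4 and B5, until only the single symbol $\langle\pi,\pi'^{n/\ell}\rangle_v$ remains; that quantity has exact order $\ell$ by condition B4 combined with Lemma~\ref{lem:hilb-local-fields}(2).

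First I study $c'$ and $d'$ at the place $v$. By condition B4, $\sigma\pi'\in L_v^{\times n}$ for every nontrivial $\sigma\in\Gal(L/K)$, so all $\sigma\neq 1$ factors appearing in $c'$ and $d'$ are trivial modulo $n$-th powers. Since $M_1 = I$ gives $j(1) = 0$ and $l(1) = 1$, the surviving contributions from $\sigma=1$ are $c'\equiv 1$ and $d'\equiv \pi'^{n/\ell}$ in $L_v^\times/L_v^{\times n}$. A parallel analysis at $v$ for $c$ and $d$ uses condition B5: since $v$ splits completely in $L/K$, each $\sigma v$ with $\sigma\neq 1$ differs from $v$, so $\sigma\pi$ is a unit at $v$. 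Combined with $i(1) = 1$ and $k(1) = 0$, this gives $c = \pi\cdot u_1$ and $d = u_2$ with $u_1, u_2$ units at $v$.

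Expanding bilinearly,
\[
\langle cc',dd'\rangle_v = \langle c,d\rangle_v + \langle c,d'\rangle_v + \langle c',d\rangle_v + \langle c',d'\rangle_v,
\]
the two terms containing $c'$ vanish because $c'$ is an $n$-th power at $v$. By Lemma~\ref{lem:hilb-local-fields}(1) every unit-unit pairing vanishes, so the four-term sum collapses to $\langle\pi,\, u_2\cdot\pi'^{n/\ell}\rangle_v$. By Lemma~\ref{lem:hilb-local-fields}(2), the order of this symbol equals the order of the image of $u_2\pi'^{n/\ell}$ in $\F_v^\times/\F_v^{\times n}$.

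Condition B4 ensures $\pi'^{n/\ell}$ has order exactly $\ell$ in this cyclic group, so the remaining and main obstacle is to show that $u_2\in L_v^{\times n}$. In the Galois-stable-subgroup case of Theorem~\ref{thm:main}, the basis is chosen so that $M_\sigma$ is upper triangular, forcing $k(\sigma) = 0$ and hence $u_2 = 1$; the conclusion is immediate. In the $\mu_n\subset K$ case, where $\det M_\sigma = 1$ but $k(\sigma)$ may be nonzero, this triviality is subtler to establish and will likely require strengthening the Cebotarev construction in Lemma~\ref{lem:cyclotomic-pairs-primes}---for instance, by imposing that $v_K$ additionally split completely in an appropriate ray class field so that $\sigma\pi\in L_v^{\times n}$ for every $\sigma\neq 1$---or alternatively a global argument via the product formula (Proposition~\ref{prop:product-formula}) applied to the places above $v_K$. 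This is the step I expect to require the most care.
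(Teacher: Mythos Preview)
Your reduction is correct through the bilinear expansion and the treatment of $c'$, $d'$; and your handling of the Galois-stable-subgroup case (where $k(\sigma)=0$ forces $d=1$) matches the paper. The genuine gap is the $\mu_n\subset K$ case, where you stop at ``requires the most care'' and float two possible fixes without executing either.

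The paper does \emph{not} strengthen the \v{C}ebotarev construction to force $\sigma\pi\in L_v^{\times n}$; instead it proves $\langle\pi,d\rangle_v=0$ directly via a short symmetry argument that you are missing. The key observations are: (i) this section assumes $n$ is \emph{odd}, so it suffices to show $2\langle\pi,d\rangle_v=0$; (ii) since $\det M_\sigma=1$, one has $M_{\sigma^{-1}}=M_\sigma^{-1}$ and hence $k(\sigma^{-1})=-k(\sigma)$, so pairing the $\sigma$ and $\sigma^{-1}$ contributions in $d$ reduces the claim to $\langle\pi,\sigma\pi\rangle_v=\langle\pi,\sigma^{-1}\pi\rangle_v$ for each $\sigma$; (iii) this last identity follows from Galois equivariance of the local symbol, $\langle\pi,\sigma\pi\rangle_v=\langle\sigma^{-1}\pi,\pi\rangle_{\sigma^{-1}v}=-\langle\pi,\sigma^{-1}\pi\rangle_{\sigma^{-1}v}$, together with the product formula (Proposition~\ref{prop:product-formula}) applied to $\langle\pi,\sigma^{-1}\pi\rangle$, which is supported only at $v$ and $\sigma^{-1}v$. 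Your product-formula intuition was pointed in the right direction, but the specific mechanism---oddness of $n$ plus the $\sigma\leftrightarrow\sigma^{-1}$ pairing coming from $\det M_\sigma=1$---is the missing idea. Note in particular that this argument genuinely fails for $n$ even, which is why Section~\ref{sec:even-period} treats that case separately.
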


\begin{proof}
    By bilinearity, we have
\[
\langle cc',dd'\rangle = \langle c,d\rangle+ \langle c,d'\rangle+\langle c',d\rangle +\langle d,d'\rangle.
\]
Since $c',d,d'$ are all units in $L_v^\times$, the last two terms are zero by Lemma~\ref{lem:hilb-local-fields}. Now $c = u\pi$, where $u$ is some unit in $L_v^\times$, so that $\langle c,d'\rangle_v = \langle \pi, d'\rangle_v$. By condition B4, $d' \equiv \pi'^{n/\ell} \pmod{L^{\times n}}$, so that $\langle c,d'\rangle_v = \langle \pi, \pi'^{n/\ell}\rangle_v$. Again applying condition B4 and Lemma~\ref{lem:hilb-local-fields}, we see that $\langle c,d'\rangle_v$ has order $\ell$. 

We now show that $\langle c,d\rangle_v = 0$. One sees that it suffices to show that $\langle \pi, d\rangle_v = 0$.

Recall that $(S,T)$ was our chosen basis for $E[n]$. If the subgroup generated by $S$ is Galois-stable over $K$, then the $M_\sigma$ are all upper triangular. In particular, $k(\sigma) = 0$ for all $\sigma \in \Gal(L/K)$, which implies that $d = 1$. Therefore $\langle \pi, d\rangle_v = 0$.

Let us assume instead that $\mu_n \subset K$. This immediately implies that $\det M_\sigma = 1$. Since $n$ is odd, it suffices to show that $2\langle \pi, d\rangle_v = 0$. By expanding out $d$ and using bilinearity, we may instead show that
\[
\langle \pi, \sigma \pi^{k(\sigma)}\rangle = -\langle \pi, \sigma^{-1} \pi^{k(\sigma^{-1})}\rangle_v.
\]
Recall that we write $M_\sigma$ by
\[
M_\sigma = \left[
  \begin{array}{cc}
    i(\sigma) & j(\sigma) \\
    k(\sigma) & l(\sigma) 
  \end{array}\right].
\]
But $M_{\sigma^{-1}} = M_{\sigma}^{-1}$. Using $\det M_\sigma = 1$, we obtain
\[
M_\sigma^{-1} = \left[
  \begin{array}{cc}
    l(\sigma) & -j(\sigma) \\
    -k(\sigma) & i(\sigma) 
  \end{array}\right].
\]
In other words, $k(\sigma^{-1}) = -k(\sigma)$. Therefore it suffices to show that $\langle \pi, \sigma \pi\rangle_v = \langle \pi, \sigma^{-1}\pi\rangle_v$. But we have
\begin{align*}
  \langle \pi, \sigma \pi\rangle_v &= \langle \sigma^{-1}\pi, \pi\rangle_{\sigma^{-1}v} \\
  &= -\langle \pi, \sigma^{-1}\pi\rangle_{\sigma^{-1}v} \\
  &= \langle \pi, \sigma^{-1}\pi\rangle_v.
\end{align*}
The last equality follows from Proposition~\ref{prop:product-formula} and the fact that the symbol $\langle \pi, \sigma^{-1}\pi\rangle$ is trivial everywhere but at $v$ and $\sigma^{-1}v$. This proves the lemma.
\end{proof}

Let $v'$ be the place of $L$ corresponding to $\pi'$.
\begin{proposition} \label{prop:ob-eta}
  $\Ob \eta$ has order $\ell$ at $v_K$ and $v'_K$, and is trivial at all other places.
\end{proposition}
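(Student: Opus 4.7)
The strategy is to compute $\Ob(\eta)\in\Br K$ one local component at a time, first computing $\res\Ob(\eta)\in\Br L$ and then descending via condition~B5. By naturality of the obstruction map, $\res\Ob(\eta) = \Ob_L(\res\eta) = \Ob_L(\Nm\xi)$, and since $E[n]\subset E(L)$ and $n$ is odd, Proposition~\ref{prop:ob-hilb} identifies this with the Hilbert symbol $\langle cc',dd'\rangle$.

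The first step is to pin down the local components of $\res\Ob(\eta)$ in $\Br L$. By Lemma~\ref{lem:cyc-local-triviality}, $\langle cc',dd'\rangle_w$ vanishes at every place $w$ of $L$ with $w(\pi\pi')=0$, leaving $v$ and $v'$ as the only possible places of nontrivial support. Lemma~\ref{lem:cyc-hilb-v} gives exact order $\ell$ at $v$, and Proposition~\ref{prop:product-formula} applied in $\Br L$ then forces the invariant at $v'$ to negate that at $v$, so the symbol has order $\ell$ at $v'$ as well.

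Next I would descend the computation to $K$ using condition~B5: since $v_K$ and $v'_K$ each split completely in $L$, the local restriction maps $\Br K_{v_K}\to\Br L_v$ and $\Br K_{v'_K}\to\Br L_{v'}$ are isomorphisms, and the orders of $\Ob(\eta)_{v_K}$ and $\Ob(\eta)_{v'_K}$ must both equal $\ell$.

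Finally I would verify $\Ob(\eta)_{w_K}=0$ at every other place $w_K$ of $K$ by cases. Archimedean $w_K$ give nothing because $n$ is odd while $\Br K_{w_K}$ is $2$-torsion. At $w_K\in S_K$, condition~B3 makes $(\pi,\pi'^{n/\ell})$ trivial in $\kummer{L_w}{n}$ for every $w\mid w_K$, so $\xi_w=0$; since $\eta_{w_K}=\sum_{w\mid w_K}\cores_{L_w/K_{w_K}}(\xi_w)$, this gives $\eta_{w_K}=0$ and hence $\Ob(\eta)_{w_K}=0$. At the remaining nonarchimedean $w_K$, $L_w/K_{w_K}$ is unramified and each $w\mid w_K$ satisfies $w(\pi\pi')=0$, so $\res\Ob(\eta)_w=0$ by Lemma~\ref{lem:cyc-local-triviality}. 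The main obstacle of the proof lies here: promoting the vanishing of $\res\Ob(\eta)_w$ to vanishing of $\Ob(\eta)_{w_K}$ itself, since the kernel of $\Br K_{w_K}\to\Br L_w$ can intersect $\Br K_{w_K}[n]$ nontrivially. I would resolve it by combining the unramifiedness of $L_w/K_{w_K}$ with the vanishing of the Hilbert symbol on pairs of units (Lemma~\ref{lem:hilb-local-fields}) to rule out contributions from the relative Brauer group $\Br(L_w/K_{w_K})$.
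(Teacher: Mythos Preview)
Your treatment of $v_K$ and $v'_K$ matches the paper's: pass to $L_v$ via B5, identify $\Ob(\res\eta)_v$ with $\langle cc',dd'\rangle_v$, invoke Lemma~\ref{lem:cyc-hilb-v}, and use the product formula for the second place.

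For triviality at the remaining $w_K$ you take a different route from the paper, and this is where your acknowledged gap sits. You restrict $\Ob(\eta)$ to $\Br L$, get zero at every $w\mid w_K$ by Lemma~\ref{lem:cyc-local-triviality}, and must then descend through $\ker(\Br K_{w_K}\to\Br L_w)$, which is cyclic of order $[L_w:K_{w_K}]$ and can certainly meet $(\Br K_{w_K})[n]$. Your proposed fix---unramifiedness of $L_w/K_{w_K}$ together with Lemma~\ref{lem:hilb-local-fields}---does not close this: those ingredients control Hilbert symbols over $L_w$, not the residual class sitting in the relative Brauer group over $K_{w_K}$.

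The paper sidesteps the descent by never passing to $\Br L$ at these places. It uses the same semilocal corestriction decomposition you already invoke at $w_K\in S_K$, namely $\eta_{w_K}=\sum_{w\mid w_K}\cores_{L_w/K_{w_K}}(\xi_w)$, but applies it at \emph{every} $w_K\neq v_K,v'_K$. The point is that $\xi$ is locally trivial as a principal homogeneous space away from $v,v'$ and their conjugates: for such $w$ one has $w(\pi)=w(\pi')=0$, so $\xi_w$ is unramified, and at a prime of good reduction with $w(n)=0$ the unramified part of $\Hp^1(L_w,E[n])$ is exactly the image of the local Kummer map $\delta$. Corestriction carries $\im\delta$ to $\im\delta$, so $\eta_{w_K}=\delta(Q)$ for some $Q\in E(K_{w_K})$, and $\Ob(\delta Q)=0$ since it represents the trivial torsor $E$ equipped with a rational degree-$n$ divisor. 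This yields $\Ob(\eta)_{w_K}=0$ directly, with no relative Brauer group to analyze.
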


\begin{proof}
  First we prove triviality. For any place $w_K$ of $K$, the corestriction map on $\Hp^1(L, E[n])$ induces a homomorphism
\[
\oplus\cores_w: \oplus \Hp^1(L_w,E[n]) \to \Hp^1(K_{w_K}, E[n])
\]
where the sum is over all places $w$ of $L$ lying over $w_K$. Recall that $\eta = \cores \xi$. By construction, $\xi$ is locally trivial everywhere except at $v$, $v'$, and their conjugates. Triviality at $w_K \neq v_K, v'_K$ follows.

Since the local invariants of a global Brauer class sum to zero, it suffices to show that $(\Ob \eta)_{v_K}$ has order $\ell$. The diagram
\[
\xymatrix{\Hp^1(K, E[n]) \ar[d] \ar[r] & \Br K \ar[d] \\
  \Hp^1(K_{v_K}, E[n]) \ar[d]^{\oplus \res} \ar[r] & \Br K_{v_K} \ar[d]^{\oplus \res} \\
  \oplus \Hp^1(L_v, E[n]) \ar[r] & \oplus \Br L_v
}
\]
commutes. The horizontal maps are the relevant obstruction maps. Commutativity follows from functoriality of localization and restriction in nonabelian cohomology. But $v_K$ splits completely in $L$ by condition~B5, so that $K_{v_K} \isom L_v$. Thus if we consider the restriction map onto a single factor
\[
\xymatrix{\Hp^1(K_{v_K}, E[n]) \ar[d] \ar[r] & \Br K_{v_K} \ar[d] \\
  \Hp^1(L_v, E[n]) \ar[r] &\Br L_v}
\]
then the vertical maps are isomorphism. Therefore the order of $(\Ob \eta)_{v_K}$ equals the order of $(\Ob \res \eta)_v$, or rather $(\Ob \Nm \xi)_v$. 

Since $E[n] \subset E(L) \subset E(L_v)$, we may use the Hilbert symbol to evaluate the obstruction map. By construction, $\kappa(\Nm \xi) = (cc',dd')$. The result follows from Lemma~\ref{lem:cyc-hilb-v}.
\end{proof}

\subsection{End of proof}
\label{sec:end-of-proof}

Let $X$ be the genus 1 curve over $K$ represented by the class $\eta$ in $\Hp^1(K, E[n])$. Clearly the period of $X$ divides $n$. Suppose that the period is smaller. Then there is some positive integer $m$ with $m<n$ and some $x\in E(K)$ such that
\[
\delta x + m\eta = 0.
\]
In particular, this holds locally at $v_K$. But by condition~B2, $\delta x$ is trivial at $v_K$ for all $x\in E(K)$. Therefore we must have $m\eta = 0$ at $v_K$. In particular, this must hold when we restrict to $L_v$. But this cannot be, for 
\begin{align*}
\kappa(\res_{L/K} m\eta) &= m\kappa(\res \eta) \\
&= m(u_1\cdot \pi, u_2) \\
& = (u_1^m\cdot \pi^m, u_2^m)
\end{align*}
for some $u_1$, $u_2$ which are units in $L_v$. We conclude that $X$ has exact period $n$.

By Proposition~\ref{prop:ob-eta}, $\Ob \eta$ has order $\ell$. Applying Proposition~\ref{prop:ob-index}, we see that the index of $X$ divides $n\ell$. Suppose the index is $n\ell'$, with $\ell'<\ell$. By Proposition~\ref{prop:ob-index} there is a class $\eta'$ in $\Hp^1(K, E[n\ell'])$ representing $X$ such that $\Ob_{n\ell'} \eta' = 0$. Let $j_*$ be the canonical homomorphism
\[
\Hp^1(K, E[n]) \to \Hp^1(K, E[n\ell']).
\]
Since $\eta$ also represents $X$, there exists $x\in E(K)$ such that
\[
\eta' = j_*(\eta) + \delta x.
\]
By Proposition~\ref{prop:ob-tate},
\[
\Ob_{n\ell'} \eta' = \Ob_{n\ell'} j_*(\eta) + T(x,X)
\]
where $T$ is the Tate pairing. Consider the latter equation locally at $v_K$. By condition~B2, $x$ lies in $nE(K_{v_K})$, so the Tate pairing is zero. The left hand term is zero by hypothesis. But by Proposition~\ref{prop:ob-different-levels}, $\Ob_{n\ell'} j_*(\eta) = \ell' \Ob_n \eta$, and the latter is \emph{not} zero since $\Ob_n \eta$ has order $\ell$ and $\ell'<\ell$. This yields a contradiction. Therefore the index of $X$ must be exactly $n\ell$.

\section{Even period}
\label{sec:even-period}

Assume now that $n$ is a power of 2. Three problems now arise in the previous arguments.

The first problem is that the obstruction map need not equal the Hilbert symbol; instead, according to Proposition~\ref{prop:ob-hilb}, $\Ob(\xi) - \langle \kappa(\xi)\rangle$ is killed by $2$. The second problem is that in the proof of Lemma~\ref{lem:cyc-local-triviality}, $\langle cc',dd'\rangle_w$ need not be trivial at real $w$; but in any case $2\langle cc', dd'\rangle_w = 0$. Finally, in our proof of Lemma~\ref{lem:cyc-hilb-v}, we showed that $\langle \pi, d\rangle_v = 0$ when $\mu_n \subset K$ by instead showing that $2\langle \pi, d\rangle_v = 0$; if $n$ is even, of course, this is insufficient.

Suppose we undertake our construction anyway, yielding $\eta \in \Hp^1(K, E[n])$. The three problems above imply that our calculation of $\Ob (\eta)$ may be off by an element of $(\Br K)[2]$. If $\ell \geq 4$, then the difference is immaterial, and the proof works. We now consider the cases $\ell = 1$ and $\ell = 2$.

Assume that $\ell = 1$. According to our main theorem, we are under the weakened hypothesis that either $\mu_{2n} \subset K$ or $E$ contains a Galois-stable cyclic subgroup of order $2n$. Using the construction from the previous section, we can come up with a curve $X'$ with period $2n$ and index either $2n$ or $4n$, though we are not able to determine which of the two holds. Suppose $\eta\in \Hp^1(K, E[2n])$ represents $X'$ with $\Ob_{2n} (\eta) \in (\Br K)[2]$. Let $X$ be the curve with class $2\eta$. Note that we may view $2\eta$ as an element of $\Hp^1(K, E[n])$; in particular, $X$ has period $n$. To show that $X$ has index $n$, it suffices to show that $\Ob_n (2\eta) = 0$. But by Proposition~\ref{prop:ob-different-levels}, $\Ob_n (2\eta) = 2\Ob_{2n} \eta = 0$.


The procedure for $\ell = 2$ is similar: construct $X'$ as before with period $2n$ and index $8n$, so that $\Ob_{2n} (\eta)$ has order $4$. Then $\Ob_{n} (2\eta)$ has order $2$, and the curve $X$ represented by $2\eta$ has period $n$ and index $2n$.

\section{Final remarks}
\label{sec:final-remarks}

The hypothesis on the main theorem amounts to a statement about the Galois representation on $E[n]$. The hypothesis is needed at precisely one place: the calculation of $\langle \pi, d\rangle_v$ appearing in the proof of Lemma~\ref{lem:cyc-hilb-v}. Recall that $d$ is a product of the conjugates of $\pi$, each appearing with some multiplicity. If one could, for example, choose $\pi$ so that $\langle \pi, \sigma \pi\rangle = 0$ for all $\sigma$, then one could generalize the theorem to arbitrary number fields. 

One can get around this to some extent, at least when we wish the index to be maximal ($I = P^2$); see~\cite{clark-sharif}, which deals with this case over arbitrary number fields.

The results of the paper should generalize to function fields, that is, finite extensions of $\F_p(T)$. The only wrinkle occurs when $p \mid n$, for then $E[n]$ is no longer an \'etale group scheme. Clark, in a personal communication, has results making use of a so-called flat Hilbert symbol. One suspects that, with greater care, the results of this paper can be duplicated using the flat symbol.

Lastly, the description of the $\Gal(L/K)$ action on $\Hp^1(L, E[n])$ yields an explicit description, in most cases, of $\Hp^1(K, E[n])$ for $K$ an arbitrary number field, in the following manner. Given $E/K$, for $n$ not divisible by a finite set of primes, we know that $\Gal(L/K)$ surjects onto $\Aut(E[n])$. One can show that $\Hp^q(L/K, E[n]) = 0$ for all $q$, whence the inflation-restriction sequence yields an isomorphism 
\[
\Hp^1(K, E[n]) = \Hp^1(L, E[n])^{\Gal(L/K)}.
\]
From Proposition~\ref{prop:G-action-cohomology}, one obtains the desired description of $\Hp^1(K, E[n])$ as pairs of elements of $L^\times/L^{\times n}$.

\bibliography{/home/postdoc/sharif/Research/biblio.bib}

\providecommand{\bysame}{\leavevmode\hbox to3em{\hrulefill}\thinspace}
\providecommand{\MR}{\relax\ifhmode\unskip\space\fi MR }
\providecommand{\MRhref}[2]{%
  \href{http://www.ams.org/mathscinet-getitem?mr=#1}{#2}
}
\providecommand{\href}[2]{#2}
\begin{thebibliography}{10}

\bibitem{cassels1963}
J.~Cassels, \emph{Arithmetic on curves of genus 1. {V. Two} counterexamples},
  J. London Math. Soc. \textbf{41} (1966), 244--248.

\bibitem{clark2005}
Pete Clark, \emph{The period-index problem in {WC}-groups {I}: elliptic
  curves}, J. Number Theory \textbf{114} (2005), 193--208.

\bibitem{clark2006}
\bysame, \emph{There are genus one curves of every index over every number
  field}, J. Reine Angew. Math. \textbf{594} (2006), 201--206.

\bibitem{clark-sharif}
Pete Clark and Shahed Sharif, \emph{Period, index, and the {T}ate-{S}hafarevich
  group of elliptic curves}, In preparation.

\bibitem{clark2007}
Pete~L. Clark, \emph{On the indices of curves over local fields}, Manuscripta
  Math. \textbf{124} (2007), no.~4, 411--426. \MR{MR2357791 (2008m:11121)}

\bibitem{gonzalezaviles2003}
Cristian {Gonz{\'a}lez-Avil{\'e}s}, \emph{Brauer groups and
  {T}ate-{S}hafarevich groups}, J. Math. Sci. Univ. Tokyo \textbf{10} (2003),
  391--419.

\bibitem{grothendieck-brauer3}
Alexander Grothendieck, \emph{Le groupe de {B}rauer. {III}. {E}xemples et
  compl\'ements}, Dix Expos\'es sur la Cohomologie des Sch\'emas,
  North-Holland, Amsterdam, 1968, pp.~88--188. \MR{39 \#5586c}

\bibitem{lang-tate1958}
Serge Lang and John Tate, \emph{Principal homogeneous spaces over abelian
  varieties}, Amer. J. Math. \textbf{80} (1958), 659--684.

\bibitem{lichtenbaum1968}
Stephen Lichtenbaum, \emph{The period-index problem for elliptic curves}, Amer.
  J. of Math. \textbf{90} (1968), 1209--1223.

\bibitem{lichtenbaum1969}
\bysame, \emph{Duality theorems for curves over $p$-adic fields}, Invent. math.
  \textbf{7} (1969), 120--136.

\bibitem{liu-lorenzini-raynaud2004}
Qing Liu, Dino Lorenzini, and Michel Raynaud, \emph{N{\'e}ron models, {L}ie
  algebras, and reduction of curves of genus one}, Invent. Math. \textbf{157}
  (2004), 455--518.

\bibitem{liu-lorenzini-raynaud2005}
\bysame, \emph{On the {B}rauer group of a surface}, Invent. Math. \textbf{159}
  (2005), no.~3, 673--676.

\bibitem{mumford1966}
David Mumford, \emph{On the equations defining abelian varieties. {I}}, Invent.
  Math. \textbf{1} (1966), 287--354. \MR{0204427 (34 \#4269)}

\bibitem{mumford1985}
\bysame, \emph{Abelian varieties}, Oxford University Press, Oxford, 1985.

\bibitem{oneil2002}
Catherine O'Neil, \emph{The period-index obstruction for elliptic curves}, J.
  Number Theory \textbf{95} (2002), 329--339.

\bibitem{oneil2004}
\bysame, \emph{Erratum to the period-index obstruction for elliptic curves}, J.
  Number Theory \textbf{109} (2004), 390.

\bibitem{serrelocalfields}
Jean-Pierre Serre, \emph{Local fields}, Springer-Verlag, New York, 1979,
  Translated from the French by Marvin Jay Greenberg. \MR{82e:12016}

\bibitem{sharif2007}
Shahed Sharif, \emph{Curves with prescribed period and index over local
  fields}, J. Algebra \textbf{314} (2007), no.~1, 157--167.

\bibitem{silvermanAEC}
Joseph~H. Silverman, \emph{The arithmetic of elliptic curves}, Springer-Verlag,
  New York, 1992, Corrected reprint of the 1986 original. \MR{95m:11054}

\bibitem{stein2002}
William Stein, \emph{There are genus one curves over $\mathbb{Q}$ of every odd
  index}, J. Reine Angew. Math. \textbf{547} (2002), 139--147.

\bibitem{stix-preprint2008}
Jakob Stix, \emph{On the period-index problem in light of the section
  conjecture},
  \href{http://arxiv.org/abs/0802.4125v1}{arXiv:math-nt/0802.4125v1}, 2008.

\bibitem{zarhin1974}
Yuri Zarhin, \emph{Noncommutative cohomology and {M}umford groups}, Mat.
  Zametki \textbf{15} (1974), 415--419.

\end{thebibliography}
\bibliographystyle{amsplain}
\end{document}